\documentclass[12pt,reqno]{amsart}
\usepackage{latexsym,amsthm,amscd, amsmath,amsfonts,amssymb,euscript,hyperref,graphics,color}
\usepackage{graphicx}
\usepackage{comment}
\usepackage{import}
\usepackage{tikz}
\usepackage{latexsym}

\def\ub{\underline{u}}
\def\Lb{\underline{L}}
\def\Cb{\underline{C}} \def\Eb{\underline{E}}
\def\Fb{\underline{F}}
\def\gslash{\mbox{$g \mkern -8mu /$ \!}}
\def\doubleint{\int\!\!\!\!\!\int}
\def\nablaslash{\mbox{$\nabla \mkern -13mu /$ \!}}
\def\laplacianslash{\mbox{$\triangle \mkern -13mu /$ \!}}

\newtheorem*{Main A priori Estimates}{Main A priori Estimates}
\newtheorem*{Main Theorem}{Main Theorem}
\newtheorem{theorem}{Theorem}[section]
\newtheorem{lemma}[theorem]{Lemma}
\newtheorem{proposition}[theorem]{Proposition}

\newtheorem{remark}[theorem]{Remark}

\setlength{\textwidth}{16cm} \setlength{\oddsidemargin}{0cm}
\setlength{\evensidemargin}{0cm}

\numberwithin{equation}{section}

\begin{document}
\title[2+1 Wave Map]{Long Time Solutions for Wave Maps with Large Data}

\author[Jinhua Wang]{Jinhua Wang}
\address{Center of Mathematical Sciences, Zhejiang University\\ Hangzhou, China}
\email{youcky0208@gmail.com}

\author[Pin Yu]{Pin Yu}
\address{Mathematical Sciences Center, Tsinghua University\\ Beijing, China}
\email{pin@math.tsinghua.edu.cn}
\thanks{JW is deeply indebted to Professors Dexing Kong and Kefeng Liu for the encouragement and guidance. She would like to thank the Mathematical Sciences Center of Tsinghua
University where the work was partially done during her visit.\\
\indent PY is supported by NSF-China Grant 11101235. He would like to thank Prof. Sergiu Klainerman for the communication of the ideas on the relaxation of the propagation estimates.}

\begin{abstract}
For $2+1$ dimensional wave maps with $\mathbb{S}^2$ as the target, we show that for all positive numbers $T_0 > 0$ and $E_0 >0$, there exist Cauchy initial data with energy at least $E_0$, so that the solution's life-span is at least $[0,T_0]$. We assume neither symmetry nor closeness to harmonic maps.
\end{abstract}
\maketitle

\section{Introduction}\label{introduction}
The $2+1$ dimensional wave map problem with $\mathbb{S}^2$ as the target is the study of maps
\begin{equation*}
\varphi: \mathbb{R}^{2+1} \rightarrow \mathbb{S}^2,
\end{equation*}
satisfying the following system of semi-linear wave equations
 \begin{equation}\label{Wave Map Equation}
 \Box \varphi = ( -|\partial_t \varphi|^2 + \sum_{i=1}^2|\partial_{x_i} \varphi|^2)\varphi.
 \end{equation}
Since we regard $\mathbb{S}^2$ as the unit sphere embedded in $\mathbb{R}^3$, the map $\varphi$ has three components $\varphi_1(t,x), \varphi_2(t,x)$ and $\varphi_3(t,x)$. Therefore, the system \eqref{Wave Map Equation} can be explicitly written as
\begin{equation*}
\Box \varphi_k = ( -\sum_{j=1}^3 |\partial_t \varphi_j|^2 + \sum_{j=1}^3\sum_{i=1}^2|\partial_{x_i} \varphi_j|^2)\varphi_k,
\end{equation*}
for $k= 1,2,3$. For the sake of simplicity, we rewrite \eqref{Wave Map Equation} as
 \begin{equation}\label{Main Equation}
 \Box \varphi =\varphi \cdot Q_0(\nabla \varphi, \nabla \varphi),
 \end{equation}
where we use $Q_0$ to denote a specific null form (see Section \ref{section null form} for definitions). The initial data are given by $(\varphi,\partial_t \varphi)|_{t=0} = (\varphi^{(0)}, \varphi^{(1)})$, where $\varphi^{(0)}:\mathbb{R}^2 \rightarrow \mathbb{S}^2$ and $\varphi^{(1)}:\mathbb{R}^2 \rightarrow T_{\varphi^{(0)}}\mathbb{S}^2$.

We remark that the system \eqref{Main Equation} (with more general targets) is invariant with respect to the scaling $\varphi (t, x) \rightarrow \varphi (\lambda t,  \lambda x)$ for $\lambda \in \mathbb{R}_{>0}$. The energy (or $\dot{H}^1$ norm) of the solution is a dimensionless quantity with respect to this scaling; we refer to such a wave map problem on $\mathbb{R}^{2+1}$ as an \emph{energy critical} problem.

We now state the main result of the paper:
\begin{Main Theorem}
For any given positive numbers $T_0 > 0$ and $E_0 >0$, there exists a smooth initial data set $(\varphi^{(0)}, \varphi^{(1)})$ for the wave map system \eqref{Wave Map Equation}, such that the energy
\begin{equation*}
\text{Energy}_{(1)}(\varphi^{(0)}, \varphi^{(1)}) = \frac{1}{2} \int_{\mathbb{R}^2} |\nabla\varphi^{(0)}|^2 + |\varphi^{(1)}|^2 d x_1 d x_2 \geq E_0,
\end{equation*}
and the life-span of the solution is at least $[0,T_0]$.
\end{Main Theorem}
\begin{remark}
Moreover, if for $k\in \mathbb{N}$ we define the $k$-th order energy of the data as
\begin{equation*}
\text{Energy}_{(k)}(\varphi^{(0)}, \varphi^{(1)}) = \frac{1}{2} \int_{\mathbb{R}^2} |\nabla^{k}\varphi^{(0)}|^2 + |\nabla^{k-1}\varphi^{(1)}|^2 d x_1 d x_2,
\end{equation*}
we can show that
\begin{equation*}
\text{Energy}_{(k)}(\varphi^{(0)}, \varphi^{(1)}) \geq \delta^{-(k-1)},
\end{equation*}
where $\delta$ is a small positive parameter. We note in passing that the higher order energies can be extremely large.
\end{remark}

We briefly summarize the progress on the local well-posedness (LWP) and global well-posedness (GWP) for wave maps (on $\mathbb{R}^{n+1}$ with general Riemannian manifolds as targets). In the subcritical case where data are in $H^s(s>n)$, pioneering works on the LWP were due to Klainerman-Machedon (\cite{K-M-95}, \cite{K-M-97-1}, \cite{K-M-97-2}) and Klainerman-Selberg (\cite{K-S-97},\cite{K-S-02}). For the critical case, one of the first results was obtained by Tataru (\cite{T-98}, \cite{T-01}) in the critical Besov spaces $\dot{B}^{\frac{n}{2}}_{2,1} \times \dot{B}^{\frac{n}{2}-1}_{2,1}$; the LWP and small data GWP in energy spaces with $\mathbb{S}^2$ as the target have been established by Tao \cite{TT-1},\cite{TT-2}. Further contributions for other target manifolds came from the works by Klainerman-Rodnianski \cite{K-R-wm}, Nahmod-Stephanov-Uhlenbeck \cite{N-S-U}, Tataru (\cite{T-04}, \cite{T-05}), and Krieger (\cite{K-03-1}, \cite{K-03-2}, \cite{K-04}). For large data, GWP has been established independently by three groups: Sterbenz and Tataru (\cite{S-T-1},\cite{S-T-2}) obtained GWP for arbitrary target manifolds with initial data below the energy of any nontrivial harmonic map; Tao (\cite{TT-3}-\cite{TT-7}) obtained GWP for the target manifold $\mathbb{H}^2$ and Krieger-Schlag \cite{K-S} established the same result for a negatively curved Riemann surface.

The blow-up phenomena for the system \eqref{Main Equation} have also been extensively studied. For $n \geq 3$, Shatah \cite{Shatah} showed the existence of self-similar blow-up solutions (when $n=2$, there is no such solution). Based on earlier works of Christodoulou and Tavildar-Zadeh (\cite{C-T-1}, \cite{C-T-2}), Shatah and Tahvildar-Zadeh (\cite{Shatah-T-92}, \cite{Shatah-T-94}); Struwe \cite{Struwe} showed that blow up in the $SO(2,\mathbb{R})$ equivariant case must result from rescaling of a harmonic sphere. Recent works of  Rodnianski-Sterbenz \cite{R-S} and Krieger-Tataru \cite{K-S-T} exhibited finite time blow-up solutions. Further investigation by Rodnianski-Raphael \cite{R-R} yielded sharp asymptotic on the dynamics at blow-up time; moreover, they proved the quantization of the energy concentrated at the singularity. We shall remark in passing that the initial data for the aforementioned blow-up results are from special perturbations of the harmonic maps.\\

Inspired by two recent works on the dynamical formation of black holes by Christodoulou \cite{Ch-08} and Klainerman-Rodnianski \cite{K-R-09}, we propose an alternative approach to study the long time existence of wave maps for large data. We will briefly recall their works.

In \cite{Ch-08}, Christodoulou discovered a remarkable mechanism responsible for the dynamical formation of black holes. He showed that a trapped surface can form, even in vacuum space-time, from completely dispersed initial configurations and by means of the focusing effect of gravitational waves. He identified an open set of initial data (so called \emph{short pulse ansatz}) without any symmetry assumptions. Although the data are no longer close to Minkowski data, he could still prove a long time existence result for these data. This establishes the first result on the long time dynamics in general relativity and paves the way for many new developments on dynamical problems relating to black holes.

In \cite{K-R-09}, Klainerman and Rodnianski extended and significantly simplified Christodoulou's work. A key ingredient in their paper is the relaxed propagation estimates, namely, if one enlarges the admissible set of initial conditions, the corresponding propagation estimates are much easier to derive. They reduced the number of derivatives needed in the estimates from two derivatives on curvature (in Christodoulou's proof) to just one. We should note that the direct consequence of the simpler proof of Klainerman-Rodnianski yields results weaker than those obtained by Christodoulou. In fact, within those more general initial data set, they can only show long time existence results for vacuum Einstein field equations; nevertheless, once such existence results are obtained, one can improve them by assuming more on the data, say, consistent with Christodoulou's assumptions and then one can derive Christodoulou's results in a straightforward manner.\\

In our current work, the choice of initial data will be analogous to the short pulse ansatz in \cite{Ch-08}; the proof will rely on a relaxed version of energy estimates similar to the relaxation of the propagation estimates in \cite{K-R-09}. These ideas can be generalized to $3+1$ dimensions. In fact, one can show that for semi-linear wave equations with general null forms nonlinearities on $\mathbb{R}^{3+1}$, there is an open set of large data which allow global solutions in the future. This will be the subject of a forthcoming paper \cite{WY} by the authors.

\section{Preliminaries}\label{Section Preliminaries}

\subsection{Geometric Preliminaries}
We review some geometric constructions on Minkowski space $\mathbb{R}^{2+1}$. Besides the standard Cartesian coordinates $(t, x_1,x_2)$, we will mainly use the null-polar coordinates $(u,\ub,\theta)$. Let $r = \sqrt{x_1^2 + x_2^2}$ be the spatial radius function, two optical functions $u$ and $\ub$ are defined as
\begin{equation*}
u = \frac{1}{2}(t-r), \quad \ub = \frac{1}{2}(t+r).
\end{equation*}
The angular coordinate $\theta$ denotes a point on unit circle $\mathbb{S}^1 \subset \mathbb{R}^2$.  We also use $C_{c}$ to denote the the level surface of the function $u = c $; similarly, $\Cb_{\ub}$ denotes a level set of $\ub$. Their geometric pictures are cones and their intersection $C_u\cap \Cb_{\ub}$ will be a circle denoted by $S_{\ub,u}$.

We use $L$ and $\Lb$ to denote the following future-pointed null vector fields:
\begin{equation*}
 L = \partial_t + \partial_r, \quad \text{and} \quad \Lb = \partial_t - \partial_r.
\end{equation*}
We use $\Omega$ to denote the rotation vector field:
\begin{equation*}
\Omega = x_1 \partial_2 - x_2 \partial_1.
\end{equation*}
In fact, $\Omega = \partial_\theta$ is a Killing vector field. We also use $\nablaslash$ to denote the derivative on $S_{\ub,u}$ with respect to the arc-length on $S_{\ub,u}$. Therefore, $\Omega = r \nablaslash$. In particular, for a given $k \in \mathbb{Z}_{\geq 0}$ and a smooth function $\phi$, we have
\begin{equation}\label{Compare Omega and nablaslash}
 |\Omega^k \phi| = |r|^k |\nablaslash^k \phi|.
\end{equation}

In Section \ref{Section Initial Data in Short Pulse Regime} and Section \ref{Section A priori Estimates}, which are the technical heart of the paper, the parameter $u$ will be confined in the interval $[u_0, -1]$ where $u_0 \sim -T_0$. The parameter $\ub$ is confined in $[u_0, \delta]$ where $\delta$ is small positive parameter which will be determined later. The corresponding cones are pictured as follows (on the left):

\includegraphics[width=5.5in]{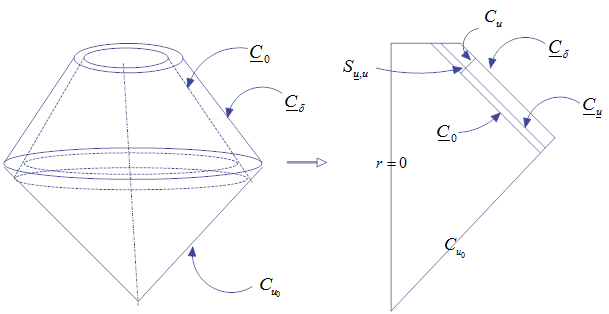}

To simplify, we can forget the $\theta$ direction and draw a simplified two dimensional picture as on the right of the above picture. When we derive estimates in Section \ref{Section Initial Data in Short Pulse Regime} and \ref{Section A priori Estimates}, $\ub \in [0, \delta]$ where $\delta$ will be sufficiently small. Since $T_0$ and $u_0$ are fixed numbers, in the region where $(\ub,u) \in [0,\delta] \times [u_0, -1]$, the parameter $r \sim 1$. In particular, we have
\begin{equation*}
 |\Omega^k \phi| \sim |\nablaslash^k \phi|.
\end{equation*}

\subsection{Energy Estimates Scheme}\label{Energy estimates scheme}
Let $\phi$ be a solution for the following non-homogenous wave equation on $\mathbb{R}^{2+1}$:
\begin{equation}\label{non-homogenous wave equation}
 \Box \phi = \Phi.
\end{equation}
The energy momentum tensor associated to $\phi$ is
\begin{equation*}
 \mathbb{T}_{\alpha \beta}[\phi] = \nabla_\alpha \phi \nabla_\beta \phi -\frac{1}{2}g_{\alpha \beta} \nabla^{\mu} \phi \nabla_{\mu} \phi.
\end{equation*}
It is symmetric and satisfies the following divergence identity,
\begin{equation}\label{divergence of T}
 \nabla^\alpha  \mathbb{T}_{\alpha \beta}[\phi] = \Phi \cdot \nabla_\beta \phi.
\end{equation}

Given a vector field $X$, which is usually called a \emph{multiplier vector field}, the associated energy currents are defined as follows
\begin{equation*}
 J^{X}_{\alpha}[\phi] = \mathbb{T_{\alpha\mu}}[\phi]X^{\mu}, \quad K^X [\phi] = \mathbb{T}^{\mu\nu}[\phi]\, ^{(X)}\pi_{\mu \nu},
\end{equation*}
where the deformation tensor $^{(X)}\pi_{\mu \nu}$ is defined by
\begin{equation}
  ^{(X)}\pi_{\mu \nu} = \frac{1}{2} \mathcal{L}_X g_{\mu \nu} = \frac{1}{2}(\nabla_\mu X_{\nu} + \nabla_\nu X_{\mu}).
\end{equation}
Thanks to \eqref{divergence of T}, we have
\begin{equation}\label{divergence of J}
 \nabla^\alpha J^{X}_{\alpha}[\phi] =  K^X [\phi] + \Phi \cdot X \phi
\end{equation}

By null frames $\{e_1 =\nablaslash, e_2 =\Lb, e_3 =L\}$, we express $\mathbb{T_{\alpha\beta}}[\phi]$ as
\begin{equation*}
\mathbb{T}(L,L)[\phi] = |L \phi|^2, \quad \mathbb{T}(L,\Lb)[\phi] =
 |\nablaslash \phi|^2, \quad \text{and}  \quad \mathbb{T}(\Lb,\Lb)[\phi] = |\Lb \phi|^2.
\end{equation*}
This manifests the dominant energy condition for $\mathbb{T_{\alpha\beta}}[\phi]$.

We shall use $X = \Omega$, $L$ and $\Lb$ as multiplier vector fields, the corresponding deformation tensors and currents are computed as follows,
\begin{equation}\label{deformation tensor of Omega-L-Lb}
\begin{split}
  ^{(\Omega)}\pi &= 0, \quad ^{(L)}\pi = \frac{1}{r} \gslash, \quad ^{(\Lb)}\pi = -\frac{1}{r} \gslash, \\
  K^\Omega &= 0, \quad  K^L = \frac{1}{2r} (|\nablaslash \phi|^2 + L\phi \, \Lb \phi), \quad K^{\Lb} = -\frac{1}{2r}(|\nablaslash \phi|^2+L\phi \, \Lb \phi).
\end{split}
\end{equation}
where $\gslash$ is the restriction of the Minkowski metric $m$ to the circle $S_{\ub,u}$.

\begin{minipage}[!t]{0.3\textwidth}
\includegraphics[width=2.5in]{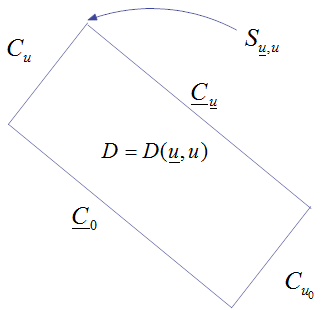}
\end{minipage}
\hspace{0.1\textwidth}
\begin{minipage}[!t]{0.5\textwidth}
We use $\mathcal{D}(u,\ub)$ to denote the space-time slab enclosed by the hypersurfaces $C_{u_0}$, $\Cb_0$, $C_{u}$ and $\Cb_{\ub}$. We integrate \eqref{divergence of J} on $\mathcal{D}(u,\ub)$, which is a domain enclosed by the null hypersurfaces $C_u, \Cb_{\ub}, C_{u_0}$ and $\Cb_0$, to derive
\begin{equation*}
\begin{split}
 &\quad \int_{C_u} \mathbb{T}[\phi](X,L)+\int_{\Cb_{\ub}} \mathbb{T}[\phi](X,\Lb)\\
 &=  \int_{C_{u_0}} \mathbb{T}[\phi](X,L)+\int_{\Cb_{0}} \mathbb{T}[\phi](X,\Lb)\\
 &\quad  +\doubleint_{\mathcal{D}(u,\ub)}  K^X [\phi] + \Phi \cdot X \phi.
 \end{split}
\end{equation*}
where  $L$ and $\Lb$ are corresponding normals of the null hypersurfaces $C_u$ and $\Cb_{\ub}$.
\end{minipage}

In applications, the data on $\Cb_{0}$ is always vanishing, thus, we have the following fundamental energy identity,
\begin{equation}\label{fundamental energy identity}
 \int_{C_u} \mathbb{T}[\phi](X,L)+\int_{\Cb_{\ub}} \mathbb{T}[\phi](X,\Lb)  =  \int_{C_{u_0}} \mathbb{T}[\phi](X,L) +\doubleint_{\mathcal{D}(u,\ub)}  K^X [\phi] + \Phi \cdot X \phi.
\end{equation}

\subsection{Null Forms}\label{section null form}
For a real valued quadratic form $Q$ defined on $\mathbb{R}^{2+1}$, it is called a \emph{null form} if for all null vector $\xi \in \mathbb{R}^{2+1}$, we have
$Q(\xi, \xi) = 0$. As an example, the metric tensor
\begin{equation}\label{basic null forms}
 Q_0 (\xi, \eta) = g(\xi, \eta),
\end{equation}
is a null form.

Given two scalar function $\phi$, $\psi$, we use $Q_0(\nabla \phi, \nabla \psi)$ to denote
\begin{equation*}
Q_0(\nabla \phi, \nabla \psi) = g^{\alpha\beta} \, \partial_\alpha \phi \, \partial_\beta \psi.
\end{equation*}

For the rotational vector field $\Omega$, we have
\begin{equation}\label{commute null form with Omega}
 \Omega( Q_0(\nabla \phi, \nabla \psi)) = Q_0(\nabla \Omega \phi, \nabla \psi) + Q_0(\nabla \phi, \nabla \Omega\psi).
\end{equation}

For a vector field $X$, we denote
\begin{equation*}
     (Q \circ X)(\nabla \phi, \nabla \psi) = Q(\nabla X \phi, \nabla \psi) + Q(\nabla \phi, \nabla X \psi),
\end{equation*}
and $[Q, X] = X Q- Q\circ X$, we then have
\begin{equation*}
\begin{split}
  [L, Q_0](\nabla \phi, \nabla \psi) & =\frac{1}{r}(2Q_0(\nabla \phi, \nabla \psi)+L\phi \Lb\psi+\Lb\phi L\psi), \\
   [\Lb, Q_0](\nabla \phi, \nabla \psi) & =-\frac{1}{r}(2Q_0(\nabla \phi, \nabla \psi)+L\phi \Lb\psi+\Lb\phi L\psi).
\end{split}
\end{equation*}

From the analytic point of view, we have the following point-wise estimates for $Q_0$ which will play a key role in the current work:
\begin{equation}\label{null form bound}
 |Q_0(\nabla \phi, \nabla \psi)| \lesssim |L \phi| \,|\Lb\psi| + |\Lb \phi|\,|L\psi| + |\nablaslash \phi|\, |\nablaslash \psi|.
\end{equation}
In particular, we see that the component $L\phi \cdot L \psi$ does not appear.

\subsection{Gronwall and Sobolev Inequalities} \label{Soboleve and Gronwall}
We first recall the standard Gronwall's inequality. Let $\phi(t)$ be a non-negative function defined on an interval $I$ with initial point $t_0$. If $\phi$ satisfies the following ordinary differential inequality,
\begin{equation*}
 \frac{d}{d t} \phi \leq a \cdot \phi + b,
\end{equation*}
where two non-negative functions  $a$, $b \in L^1(I)$, then for all $t \in I$, we have,
\begin{equation}\label{Gronwall}
 \phi(t) \leq e^{A(t)}(\phi(t_0)+\int_{t_0}^t e^{-A(\tau)}b(\tau)d\tau),
\end{equation}
where $A(t) = \int_{t_0}^t a(\tau) d\tau$.
\\

We the recall the Sobolev inequalities on the circle $S_{\ub,u}$.
\begin{equation}\label{L-infty into L-2}
    \sup_{S_{\ub, u}}|f|\leq |u|^{\frac{1}{2}}(\int_{S_{\ub, u}}|\nablaslash f|^2 d \mu_{\gslash})^{\frac{1}{2}}+|u|^{-\frac{1}{2}}(\int_{S_{\ub, u}}|f|^2 d \mu_{\gslash})^{\frac{1}{2}}.
\end{equation}
\begin{equation}\label{L-6 into L-4 and L-2}
    \int_{S_{\ub, u}}|f|^6 d \mu_{\gslash}\leq (\int_{S_{\ub, u}}|f|^4d\mu_{\gslash})\{|u|\int_{S_{\ub, u}}|\nablaslash f|^2d\mu_{\gslash}+|u|^{-1}\int_{S_{\ub, u}}|f|^2d\mu_{\gslash}\}
\end{equation}
where $\phi$ is a smooth function on $S_{\ub,u}$. We remark that, in sequel, $|u| \sim r$ (up to an error of size $\delta$) is the diameter of the sphere $S_{\ub,u}$.\\
Finally, we recall some Sobolev inequalities on null cones $C_u$ and $\Cb_{\ub}$. Since the proofs reflect the influence the geometry of the cones, we shall give the proofs in detail.
\begin{proposition}\label{sobolev on cu}
Let $\phi$ be a smooth function on $C_{u}$ vanishing on $S_{0,u}$, then we have
\begin{equation*}
 |u|^{\frac{1}{4}} \|\phi \|_{L^{4}(S_{\ub,u})} \lesssim \| L\phi \|^{\frac{1}{2}}_{L^{2}(C_{u})}(\| \phi \|^{\frac{1}{2}}_{L^{2}(C_{u})}
 +|u|^{\frac{1}{2}}\|\nablaslash\phi\|^{\frac{1}{2}}_{L^{2}(C_{u})}),
 \end{equation*}
\begin{equation*}
\|\phi \|_{L^{2}(S_{\ub,u})} \lesssim \| L\phi \|^{\frac{1}{2}}_{L^{2}(C_{u})}\| \phi \|^{\frac{1}{2}}_{L^{2}(C_{u})}.
 \end{equation*}
\end{proposition}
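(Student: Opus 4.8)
\emph{Proof proposal.} The plan is to reduce both inequalities to the one–dimensional fundamental theorem of calculus along the null generator $L$ of $C_u$, fed into the Sobolev inequalities \eqref{L-infty into L-2} and \eqref{L-6 into L-4 and L-2} on the circular cross–sections $S_{\ub,u}$. Throughout I parametrize $C_u$ by $(\ub,\theta)$, with $\ub$ running over its (bounded) range starting at $0$; along the generators one has $L=\partial_{\ub}$, and the cone integral is $\int_{C_u} f = \int \big(\int_{S_{\ub,u}} f\, d\mu_{\gslash}\big)\, d\ub$. Since $r=\ub-u=\ub+|u|$ with $|u|\ge 1$ and $\ub$ confined to an interval of length $\lesssim \delta$, we have $r\sim|u|$ uniformly on $C_u$; in particular the weight $r$ entering $d\mu_{\gslash}$ at two different values of $\ub$ are comparable, a fact I use repeatedly without further comment.

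For the second (easier) estimate, write $\phi^{2}(\ub,\theta)=\int_{0}^{\ub}L(\phi^{2})(\ub',\theta)\,d\ub' = 2\int_{0}^{\ub}\phi\,L\phi\,(\ub',\theta)\,d\ub'$, using that $\phi$ vanishes on $S_{0,u}$. Multiplying by $r(\ub)$, bounding $r(\ub)\lesssim r(\ub')$, integrating over $S^{1}$ and in $\ub'$, and then applying Cauchy–Schwarz yields $\|\phi\|_{L^{2}(S_{\ub,u})}^{2}\lesssim \int_{C_{u}}|\phi|\,|L\phi|\lesssim \|\phi\|_{L^{2}(C_{u})}\|L\phi\|_{L^{2}(C_{u})}$, which is the claim.

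For the first estimate I would run the same $L$-transport on $\phi^{4}$: $\phi^{4}(\ub,\theta)=4\int_{0}^{\ub}\phi^{3}L\phi\,(\ub',\theta)\,d\ub'$, hence, after inserting the comparable weights and using Cauchy–Schwarz, $\|\phi\|_{L^{4}(S_{\ub,u})}^{4}\lesssim \int_{C_{u}}|\phi|^{3}|L\phi|\le \|L\phi\|_{L^{2}(C_{u})}\big(\int_{C_{u}}|\phi|^{6}\big)^{1/2}$. Next I would estimate the $L^{6}(C_{u})$ norm by applying the circle inequality \eqref{L-6 into L-4 and L-2} fibrewise in $\ub'$ and integrating, which gives $\int_{C_{u}}|\phi|^{6}\lesssim \big(\sup_{\ub'}\|\phi\|_{L^{4}(S_{\ub',u})}^{4}\big)\big(|u|\,\|\nablaslash\phi\|_{L^{2}(C_{u})}^{2}+|u|^{-1}\|\phi\|_{L^{2}(C_{u})}^{2}\big)$. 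Writing $M:=\sup_{\ub}\|\phi\|_{L^{4}(S_{\ub,u})}^{4}$ and combining the two displays, one arrives at $M\lesssim \|L\phi\|_{L^{2}(C_{u})}\,M^{1/2}\,\big(|u|\,\|\nablaslash\phi\|_{L^{2}(C_{u})}^{2}+|u|^{-1}\|\phi\|_{L^{2}(C_{u})}^{2}\big)^{1/2}$; since $M$ is finite ($\phi$ smooth, relevant portion of $C_{u}$ compact), one absorbs $M^{1/2}$ to obtain $M\lesssim \|L\phi\|_{L^{2}(C_{u})}^{2}\,|u|^{-1}\big(\|\phi\|_{L^{2}(C_{u})}^{2}+|u|^{2}\|\nablaslash\phi\|_{L^{2}(C_{u})}^{2}\big)$. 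Taking fourth roots and using $(a+b)^{1/4}\lesssim a^{1/4}+b^{1/4}$ recovers exactly the stated inequality.

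The only genuinely delicate steps are bookkeeping: keeping the $r$–versus–$|u|$ weights under control (harmless, since $\delta$ small forces $r\sim|u|$ on $C_{u}$) and justifying the absorption, which is where the smoothness of $\phi$ and the boundedness of the $\ub$–range enter to guarantee $M<\infty$. Everything else is the standard interplay between the fundamental theorem of calculus along null generators and Sobolev embedding on the cross–sections.
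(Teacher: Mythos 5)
Your proof is correct and follows essentially the same route as the paper's: control $\sup_{\ub}\|\phi\|_{L^4(S_{\ub,u})}^4$ by transporting $\phi^4$ along $L$ from the vanishing circle $S_{0,u}$ and applying Cauchy--Schwarz, bound $\int_{C_u}|\phi|^6$ via the fibrewise circle Sobolev inequality \eqref{L-6 into L-4 and L-2}, and then absorb/cancel to close. The only cosmetic difference is that the paper packages the transport step as an ODE plus Gronwall to handle the $\tfrac{1}{r}$ volume-form expansion term, whereas you absorb it via the comparability $r(\ub)\sim r(\ub')$; these are equivalent here since $\ub$ ranges over an interval of length $\lesssim\delta\ll|u|$.
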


\begin{proof}
We multiply \eqref{L-6 into L-4 and L-2} by $|u|^2$ and integrate on $C_u$ to derive
\begin{equation}\label{L-6 into L-4 on sphere and L-2 on cu}
    \int_{C_{u}}|u|^2|\phi|^6 d \mu_{\gslash}\leq \sup_{\ub}(\int_{S_{\ub, u}}|u||\phi|^4d\mu_{\gslash})\{\int_{C_{u}}||u|\nablaslash \phi|^2+\int_{C_{u}}|\phi|^2\}.
\end{equation}
We consider the integral $ x(\ub)=\int_{S_{\ub, u}}|u||\phi|^4 d \mu_{\gslash}$ on $S_{\ub,u}$ (notice that $u$ is fixed). Its derivative in $\ub$ direction reads as
 \begin{equation*}
    \frac{d}{d \ub}x(\ub)=\int_{S_{\ub, u}}|u|\{L(|\phi|^4)+\frac{1}{r}|\phi|^4\} d \mu_{\gslash}.
\end{equation*}
We remark that the term $\dfrac{1}{r}$ comes from the derivative of the volume form and it is the null expansion (mean curvature) of $S_{\ub,u}$ in the $L$ direction. Therefore,
\begin{equation}\label{ode of x(ub)}
    \frac{d}{d \ub}x(\ub)\lesssim\frac{1}{|u|}x+b,
\end{equation}
where $b(\ub)=\int_{S_{\ub, u}} |u||\phi|^3|L \phi| d \mu_{\gslash}$. Recall that $x(0)=0$ since $\phi$ vanishes on $S_{0,u}$. In view of the fact that $\ub \leq \delta$ and $\delta \ll u$, Gronwall's inequality yields
 \begin{equation*}
   x(\ub) \lesssim\int_{C_{u}}4|u||\phi|^3|L \phi| \lesssim (\int_{C_{u}}|u|^2|\phi|^6)^{\frac{1}{2}}(\int_{C_{u}}|L \phi|^2)^{\frac{1}{2}}.
\end{equation*}
Thus, \eqref{ode of x(ub)} implies
\begin{equation}\label{L4 into L6 and L2 on cu}
    \sup_{\ub}(\int_{S_{\ub, u}}|u||\phi|^4d\mu_{\gslash})\lesssim (\int_{C_{u}}|u|^2|\phi|^6)^{\frac{1}{2}}(\int_{C_{u}}|L \phi|^2)^{\frac{1}{2}}
\end{equation}
We Substitute \eqref{L6 into L2 on cu} in \eqref{L-6 into L-4 on sphere and L-2 on cu} and we cancel $(\int_{C_{u}}|u|^2|\phi|^6)^{\frac{1}{2}}$ on both sides to obtain
\begin{equation}\label{L6 into L2 on cu}
    (\int_{C_{u}}|u|^2|\phi|^6)^{\frac{1}{2}}\lesssim (\int_{C_{u}}|L \phi|^2)^{\frac{1}{2}} \{\int_{C_{u}}||u|\nablaslash \phi|^2+\int_{C_{u}}|\phi|^2\}
\end{equation}
We then substitute \eqref{L6 into L2 on cu} in \eqref{L4 into L6 and L2 on cu} and this proves the first inequality in the proposition. The second inequality can be proved in a similar but much more straightforward way.
\end{proof}

\begin{proposition}\label{sobolev on cbub}
Let $\phi$ be a smooth function on $\Cb_{\ub}$, we have the following estimates:
\begin{equation*}
 |u|^{\frac{1}{4}} \|\phi \|_{L^{4}(S_{\ub,u})} \lesssim |u_0|^{\frac{1}{4}}\|\phi\|_{L^{4}(S_{\ub,u_{0}})}+ \| \Lb\phi \|^{\frac{1}{2}}_{L^{2}(\Cb_{\ub})}(\| \phi \|^{\frac{1}{2}}_{L^{2}(\Cb_{\ub})}
 +\||u'|\nablaslash\phi\|^{\frac{1}{2}}_{L^{2}(\Cb_{\ub})}).
 \end{equation*}
\begin{equation*}
\|\phi \|_{L^{2}(S_{\ub,u})} \lesssim \|\phi \|_{L^{2}(S_{\ub,u_0})}+ \| \Lb\phi \|^{\frac{1}{2}}_{L^{2}(\Cb_{\ub})}\| \phi \|^{\frac{1}{2}}_{L^{2}(\Cb_{\ub})}.
 \end{equation*}
\end{proposition}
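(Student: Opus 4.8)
The plan is to transplant the proof of Proposition~\ref{sobolev on cu} to the incoming cone $\Cb_{\ub}$, with the null generator $L$ of $C_u$ replaced by $\Lb$ (along $\Cb_{\ub}$, where $\ub$ is fixed, one has $\partial_u=\Lb$) and the parameter $\ub\in[0,\delta]$ replaced by $u\in[u_0,-1]$. The one essential new feature is that $\phi$ is no longer assumed to vanish on any cross–section of $\Cb_{\ub}$: the one–dimensional differential inequality is now integrated \emph{starting from} $u=u_0$ rather than from a vanishing slice, and this is exactly what produces the boundary terms $|u_0|^{1/4}\|\phi\|_{L^4(S_{\ub,u_0})}$ and $\|\phi\|_{L^2(S_{\ub,u_0})}$.

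For the first inequality I would proceed as follows. Multiply the circle–Sobolev inequality \eqref{L-6 into L-4 and L-2} by $|u|^2$ (constant on each $S_{\ub,u}$) and integrate in $u$ over $\Cb_{\ub}$; since $|u|\sim r$ is the diameter of $S_{\ub,u}$, this gives the analogue of \eqref{L-6 into L-4 on sphere and L-2 on cu},
\begin{equation*}
 \int_{\Cb_{\ub}}|u|^2|\phi|^6 \lesssim \Big(\sup_{u}\,|u|\int_{S_{\ub,u}}|\phi|^4\,d\mu_{\gslash}\Big)\Big\{\int_{\Cb_{\ub}}||u|\nablaslash\phi|^2 + \int_{\Cb_{\ub}}|\phi|^2\Big\}.
\end{equation*}
Set $y(u)=\int_{S_{\ub,u}}|u||\phi|^4\,d\mu_{\gslash}$. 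Its $u$–derivative is computed with $\partial_u=\Lb$ using $\Lb|u|=-1$ and the fact that the null expansion of $S_{\ub,u}$ in the $\Lb$–direction equals $-\tfrac1r$ (the derivative of the volume form), so that, with $|u|\sim r$,
\begin{equation*}
 \Big|\frac{d}{du}y(u)\Big| \lesssim \frac{1}{|u|}\,y(u) + \int_{S_{\ub,u}}|u|\,|\phi|^3\,|\Lb\phi|\,d\mu_{\gslash}.
\end{equation*}
Now integrate from $u_0$ (Gronwall \eqref{Gronwall} applies, the coefficient $\tfrac1{|u|}$ being integrable over the bounded interval $[u_0,-1]$), apply Cauchy–Schwarz to the resulting integral, feed the bound back into the displayed $L^6$–estimate and absorb $\big(\int_{\Cb_{\ub}}|u|^2|\phi|^6\big)^{1/2}$ on the left exactly as in the passage from \eqref{L-6 into L-4 on sphere and L-2 on cu} to \eqref{L6 into L2 on cu}; a final substitution together with $ab\le\tfrac12 a^2+\tfrac12 b^2$ (to split off the boundary contribution $y(u_0)=|u_0|\|\phi\|_{L^4(S_{\ub,u_0})}^4$) yields
\begin{equation*}
 |u|\,\|\phi\|_{L^4(S_{\ub,u})}^4 \lesssim |u_0|\,\|\phi\|_{L^4(S_{\ub,u_0})}^4 + \|\Lb\phi\|_{L^2(\Cb_{\ub})}^2\big(\|\phi\|_{L^2(\Cb_{\ub})}^2 + \||u'|\nablaslash\phi\|_{L^2(\Cb_{\ub})}^2\big),
\end{equation*}
and taking fourth roots is the first inequality. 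The second inequality follows the same template with $z(u)=\int_{S_{\ub,u}}|\phi|^2\,d\mu_{\gslash}$ in place of $y$: one has $|\tfrac{d}{du}z|\lesssim\tfrac1{|u|}z+\int_{S_{\ub,u}}|\phi||\Lb\phi|$, hence $z(u)\lesssim z(u_0)+\int_{\Cb_{\ub}}|\phi||\Lb\phi|\le z(u_0)+\|\phi\|_{L^2(\Cb_{\ub})}\|\Lb\phi\|_{L^2(\Cb_{\ub})}$, and square roots finish it.

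The computations are routine; the only places that need care, and the only genuine departures from Proposition~\ref{sobolev on cu}, are (i) that $|u|$ is genuinely non-constant along $\Cb_{\ub}$ — this is exactly why the weight inside the norm appears as $\||u'|\nablaslash\phi\|_{L^2(\Cb_{\ub})}$ rather than $|u|\,\|\nablaslash\phi\|_{L^2(\Cb_{\ub})}$ — and (ii) that the boundary term on $S_{\ub,u_0}$, automatically zero in Proposition~\ref{sobolev on cu}, must now be carried through the Cauchy–Schwarz and absorption steps and separated cleanly at the end. There is no new analytic difficulty: the $u$–interval $[u_0,-1]$ is fixed (length $\sim T_0$) and the weight $\tfrac1{|u|}$ lies in $L^1$ there (with $\int_{u_0}^{-1}\tfrac{d\tau}{|\tau|}=\log|u_0|$), so the Gronwall factor is a harmless constant absorbed into $\lesssim$.
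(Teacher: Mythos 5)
Your argument is correct and is essentially the paper's own proof: the same weighted quantity $x(u)=\int_{S_{\ub,u}}|u||\phi|^4\,d\mu_{\gslash}$, the same transport inequality along $\Lb$, the same Cauchy--Schwarz and absorption of $\big(\int_{\Cb_{\ub}}|u|^2|\phi|^6\big)^{1/2}$ via the quadratic inequality that separates the boundary term $x(u_0)$. The only (harmless) cosmetic difference is that the paper observes the coefficient $-\tfrac1r-\tfrac1{|u|}$ is negative and simply drops that term, whereas you keep its absolute value and pay a bounded Gronwall factor $\sim|u_0|^{C}$, which is fine since $u_0$ is fixed.
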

\begin{proof}We multiply \eqref{L-6 into L-4 and L-2} by $|u|^2$ and integrate on $\Cb_{\ub}$ to derive
\begin{equation}\label{L-6 into L-4 on sphere and L-2 on cbub}
    \int_{\Cb_{\ub}}|u|^2|\phi|^6 d \mu_{\gslash}\leq \sup_{u}(\int_{S_{\ub, u}}|u||\phi|^4d\mu_{\gslash})\{\int_{\Cb_{\ub}}||u'|\nablaslash \phi|^2+\int_{\Cb_{\ub}}|\phi|^2\}.
\end{equation}
We consider the integral $x(u)=\int_{S_{\ub, u}}|u||\phi|^4 d \mu_{\gslash}$ on $S_{\ub, u}$ (notice that $\ub$ is fixed). Its derivative in $u$ direction reads as
 \begin{equation*}
    \frac{d}{d u}x(u)=\int_{S_{\ub, u}}|u|\{\Lb(|\phi|^4)+(-\frac{1}{r}-\frac{1}{|u|})|\phi|^4\} d \mu_{\gslash}.
\end{equation*}
We remark that the term $-\dfrac{1}{r}$ comes from the derivative of the volume form and it is the incoming null expansion (mean curvature) of $S_{\ub,u}$ in the $\Lb$ direction. Since $-\dfrac{1}{r}-\dfrac{1}{|u|}<0$, we have
 \begin{equation*}\label{ode of x(u)}
    \frac{d}{d u}x(u)\leq \int_{S_{\ub, u}}4|u||\phi|^3|\Lb \phi| d \mu_{\gslash},
\end{equation*}
We integrate on $\Cb_{\ub}$ to derive
 \begin{equation*}
    x(u)\lesssim x(u_0)+ (\int_{\Cb_{\ub}}|u'|^2|\phi|^6)^{\frac{1}{2}}(\int_{\Cb_{\ub}}|\Lb \phi|^2)^{\frac{1}{2}}.
\end{equation*}
Let $A=\int_{\Cb_{\ub}}|\Lb \phi|^2$ and $y=\int_{\Cb_{\ub}}|u|^2|\phi|^6$, we have
\begin{equation}\label{L4 into L6 and L2 on cbub}
    \sup_{u}(x(u))\lesssim x(u_0)+A^{\frac{1}{2}}y^{\frac{1}{2}}.
\end{equation}
Let $B=\int_{\Cb_{\ub}}||u'|\nablaslash \phi|^2+\int_{\Cb_{\ub}}|\phi|^2$. We substitute \eqref{L4 into L6 and L2 on cbub} in \eqref{L-6 into L-4 on sphere and L-2 on cbub}, therefore,
\begin{equation*}
    y\lesssim (x(u_0)+A^{\frac{1}{2}}y^{\frac{1}{2}})B.
\end{equation*}
This implies $y\lesssim (x(u_0)+AB)B$. Thus,
\begin{equation}\label{L6 into L2 on cbub-1}
    A^{\frac{1}{2}}y^{\frac{1}{2}}\lesssim (2x(u_0)AB)^{\frac{1}{2}}+AB\lesssim x(u_0)+2AB ,
\end{equation}
We now substitute \eqref{L6 into L2 on cbub-1} in turn in \eqref{L4 into L6 and L2 on cbub}. It leads to the first inequality of the proposition. The second inequality can be proved in a similar but much more straightforward way.
\end{proof}

Combing the above estimates, we have the following Sobolev estimates for $L^\infty$ norms:
\begin{equation*}
   \|\phi\|_{L^\infty} \lesssim |u|^{-\frac{1}{2}}(\| L\phi \|^{\frac{1}{2}}_{L^{2}(C_{u})}\| \phi \|^{\frac{1}{2}}_{L^{2}(C_{u})}+\||u|\nablaslash L\phi \|^{\frac{1}{2}}_{L^{2}(C_{u})}\||u| \nablaslash\phi \|^{\frac{1}{2}}_{L^{2}(C_{u})}),
\end{equation*}
and
\begin{equation*}
    \begin{split}
       \|\phi\|_{L^\infty} & \lesssim |u|^{\frac{1}{2}}\|\nablaslash\phi \|_{L^{2}(S_{\ub,u_0})}+|u|^{-\frac{1}{2}}\|\phi \|_{L^{2}(S_{\ub,u_0})}\\
         &+|u|^{-\frac{1}{2}}(\| \Lb\phi \|^{\frac{1}{2}}_{L^{2}(\Cb_{\ub})}\| \phi \|^{\frac{1}{2}}_{L^{2}(\Cb_{\ub})}+\||u'|\nablaslash \Lb\phi \|^{\frac{1}{2}}_{L^{2}(\Cb_{\ub})}\||u'| \nablaslash\phi \|^{\frac{1}{2}}_{L^{2}(\Cb_{\ub})}).
     \end{split}
\end{equation*}

\subsection{Strategy of the Proof}

We now sketch the main structures of the proof. The data will be eventually given on $t = u_0 + \delta$ and the solution will exits at least for $t \in [u_0 + \delta, -1]$. This can be read off from the following picture.

\includegraphics[width=4.5 in]{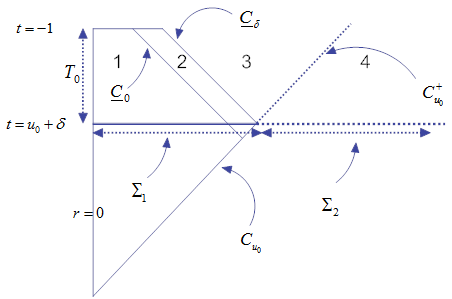}

\begin{itemize}
\item First Step. We give initial data on the null hypersurface $C_{u_0}$ where $u_0 \leq \ub \leq \delta$. When $u_0 \leq \ub \leq 0$, the data is trivial, therefore the solution in Region $1$ in the picture is a constant map. When $0 \leq \ub \leq \delta$, the data will be prescribed in a specific form (see next section for detailed account):
    \begin{equation*}
 \varphi(\ub, u_0, \theta) = \frac{\delta^{\frac{1}{2}}\psi_0 (\frac{\ub}{\delta}, \theta) + (0,0,1)}{\sqrt{\delta|\psi_0 (\frac{\ub}{\delta}, \theta)|^2 +1}}.
\end{equation*}
where the energy will be approximately $E_0$. We then show that we can construct a solution in Region $2$ in the picture. As a consequence, we take the restriction of the solution constructed in this step to the surface $\Sigma_1 \subset \{t = u_0 + \delta\}$ (see the above picture) as the first part of the Cauchy data.

\item Second Step. We choose a smooth extension of the data constructed from the first step to $\Sigma_2 \subset \{t = u_0 + \delta\}$ in such a way that the energy (up to certain order) is small. By small data theory, we can construct a solution in Region $4$.

\item Third Step. From previous two steps, we actually can show that the restriction of the solution already constructed to $\Cb_{\delta}$ and $C_{u_0}^+$ (where $\ub \geq \delta$) are small in energy norms. We use them as initial data and we can then solve another small data problem to construct solution on Region $3$ in the picture. We finally patches the solutions in Region $1,2,3$ and $4$ to finish the construction.
\end{itemize}

We remark that the first step is the most difficult part since the data is no longer small and we have to carefully deal with the cancelations from the structure of the wave map equations and the profile of the data. The second and the third parts are more or less standard.

\section{Choice of the Initial Data in Region $1$ and $2$}\label{Section Initial Data in Short Pulse Regime}

Let $\widetilde{C}_{u_0}$ be the truncated light-cone defined as follows
\begin{equation*}
\widetilde{C}_{u_0} = \{x \in \mathbb{R}^{3+1} | u(x) = u_0,  u_0 \leq \ub(x) \leq \delta \}.
\end{equation*}

First of all, we require that the data $\varphi(\ub,u_0,\theta)$ (where $u_0$ now is regarded as a fixed value) of \eqref{Main Equation} to satisfy
\begin{equation*}
\varphi(\ub,u_0,\theta) = (0,0,1) \in \mathbb{S}^2 \subset \mathbb{R}^3 \quad \text{for all} \quad \ub \leq 0.
\end{equation*}
Therefore, according to the weak Huygens principle for wave equations, the solution $\varphi$ of \eqref{Main Equation} satisfies
 \begin{equation*}
 \varphi(x) \equiv (0,0,1)  \quad \text{in Region $1$}, \,\, i.e., \quad \ub(x) \leq 0,  u_0 \leq u(x) \leq 0.
 \end{equation*}
 In particular, $\varphi \equiv (0,0,1)$ on $\Cb_0$ up to infinite order.

Secondly, we prescribe $\varphi$ on $C_{u_0}$ by
\begin{equation}\label{precise short pulse data on C_0}
 \varphi(\ub, u_0, \theta) = \frac{\delta^{\frac{1}{2}}\psi_0 (\frac{\ub}{\delta}, \theta) + (0,0,1)}{\sqrt{\delta|\psi_0 (\frac{\ub}{\delta}, \theta)|^2 +1}},
\end{equation}
where $\psi_0$ is a \emph{fixed} smooth $\mathbb{R}^3$-valued function supported in $(0,1)$.

The data given in the above form is called a \emph{short pulse}, a name invented by Christodoulou in \cite{Ch-08}. In his work, he prescribed the shear (more precisely, the conformal geometry) of the initial null hypersurface in a similar form as \eqref{precise short pulse data on C_0}. The shear in the situation of \cite{Ch-08} is exactly the initial data for Einstein vacuum equation.

We remark that the derivative of the data can be extremely \emph{large} if $\delta$ is small. It will be obvious once we derive $L^\infty$ estimates for derivatives of $\varphi$ in the rest of the section.

In order to derive estimates for initial data, we collect some commutator formulas for future use. Denote Lie derivatives $D=\mathcal{L}_L, \underline{D}=\mathcal{L}_{\underline{L}}$, on $\mathbb{R}^{2+1}$ we have,
\begin{equation}\label{commutates}
\begin{split}
[\mathcal{L}_\Omega, \nablaslash]&=0, \quad [D, \nablaslash]=0, \quad [\underline{D}, \nablaslash]=0,\\
[\Box, \Omega]&=0, \quad [D, \Omega]=0, \quad [\underline{D}, \Omega]=0,\\
[\Box, L]&=\frac{1}{2r^2}(L-\Lb)+\frac{2}{r}\laplacianslash, \,\,\,[\Box, \Lb]=\frac{1}{2r^2}(\Lb-L)-\frac{2}{r}\laplacianslash.
\end{split}
\end{equation}
Commuting $\Omega$ with \eqref{Main Equation} $n$ times, in view of \eqref{commutates}, we have\footnote{ We shall ignore the numerical constants since they are irrelevant in the context.}
\begin{equation}\label{commute n Omega with main equation}
\begin{split}
 \Box \Omega^{n} \varphi &= \sum_{ i+j=n}C_{n}^{i}\Omega^{i}\varphi\sum_{ p+q=j}
Q_{0}(\nabla\Omega^{p}\varphi, \nabla\Omega^{q}\varphi)\\
&= \sum_{ i+p+q=n} \Omega^{i}\varphi \cdot Q_{0}(\nabla\Omega^{p}\varphi, \nabla\Omega^{q}\varphi).
\end{split}
\end{equation}
Commuting $L, \Omega$ with \eqref{Main Equation}, in view of \eqref{commutates}, we derive
\begin{equation*}\label{commute L Omega n with main equation}
\begin{split}
 \Box L \Omega^{n} \varphi &= \sum_{ i+j=n} \{C_{n}^{i}L\Omega^{i}\varphi\sum_{ p+q=j}
Q_{0}(\nabla\Omega^{p}\varphi, \nabla\Omega^{q}\varphi)+2C_{n}^{i}\Omega^{i}\varphi\sum_{ p+q=j}
Q_{0}(\nabla L\Omega^{p}\varphi, \nabla\Omega^{q}\varphi) \}\\
 & + \sum_{ i+j=n}2C_{n}^{i}\Omega^{i}\varphi\sum_{ p+q=j}
\frac{1}{r}\{2Q_{0}(\nabla \Omega^{p}\varphi, \nabla\Omega^{q}\varphi)+L\Omega^{p}\varphi \Lb\Omega^{q}\varphi+\Lb\Omega^{p}\varphi L\Omega^{q}\varphi\}\\
&- \frac{1}{2r^2}(\Lb \Omega^{n} \varphi -L \Omega^{n} \varphi)+\frac{2}{r}\laplacianslash \Omega^{n} \varphi.
\end{split}
\end{equation*}
Thus,
\begin{equation}\label{commute L Omega n with main equation}
\begin{split}
 \Box L \Omega^{n} \varphi &= \sum_{ i+p+q=n} \{L\Omega^{i}\varphi\cdot
Q_{0}(\nabla\Omega^{p}\varphi, \nabla\Omega^{q}\varphi)+\Omega^{i}\varphi \cdot Q_{0}(\nabla L\Omega^{p}\varphi, \nabla\Omega^{q}\varphi) \}\\
 & + \sum_{ i+p+q=n}\Omega^{i}\varphi \cdot \frac{1}{r}\{Q_{0}(\nabla \Omega^{p}\varphi, \nabla\Omega^{q}\varphi)+L\Omega^{p}\varphi \Lb\Omega^{q}\varphi+\Lb\Omega^{p}\varphi L\Omega^{q}\varphi\}\\
&- \frac{1}{2r^2}(\Lb \Omega^{n} \varphi -L \Omega^{n} \varphi)+\frac{2}{r}\laplacianslash \Omega^{n} \varphi.
\end{split}
\end{equation}
Similarly, we commute $\Lb, \Omega$ with \eqref{Main Equation} to derive
\begin{equation}\label{commute Lb Omega n with main equation}
\begin{split}
 \Box \Lb \Omega^{n} \varphi &= \sum_{ i+p+q=n}\{\Lb\Omega^{i}\varphi \cdot Q_{0}(\nabla\Omega^{p}\varphi, \nabla\Omega^{q}\varphi)+ \Omega^{i}\varphi \cdot Q_{0}(\nabla \Lb\Omega^{p}\varphi, \nabla\Omega^{q}\varphi)\}\\
& +\sum_{ i+p+q=n}\Omega^{i}\varphi \cdot \frac{1}{r}\{Q_{0}(\nabla \Omega^{p}\varphi, \nabla\Omega^{q}\varphi)+L\Omega^{p}\varphi \Lb\Omega^{q}\varphi+\Lb\Omega^{p}\varphi L\Omega^{q}\varphi\}\\
&+ \frac{1}{2r^2}(\Lb \Omega^{n} \varphi -L \Omega^{n} \varphi)-\frac{2}{r}\laplacianslash \Omega^{n} \varphi.
\end{split}
\end{equation}

In view of \eqref{precise short pulse data on C_0}, by taking derivatives in $L$ or $\nablaslash$ direction, we obtain
\begin{equation*}
 \|L\varphi\|_{L^{\infty}(C_{u_{0}})} \lesssim \delta^{-\frac{1}{2}},
\end{equation*}
and
\begin{equation*}
\|\nablaslash\varphi\|_{L^{\infty}(C_{u_{0}})} \lesssim \delta^{\frac{1}{2}}.
\end{equation*}
In fact, by taking $L$ or $\nablaslash$ derivatives consecutively, for $k \in \mathbb{Z}_{\geq 0}$, we derive immediately,
\begin{equation}\label{L infinity on C0 for L or nablaslash directions}
\begin{split}
\|L\nablaslash^{k}\varphi\|_{L^{\infty}(C_{u_{0}})}
 &\lesssim_k \delta^{-\frac{1}{2}},\\
\|\nablaslash^{k+1}\varphi\|_{L^{\infty}(C_{u_{0}})} &\lesssim_k \delta^{\frac{1}{2}},\\
\|L^{2}\nablaslash^{k}\varphi\|_{L^{\infty}(C_{u_{0}})} &\lesssim_k \delta^{-\frac{3}{2}}.
\end{split}
\end{equation}
We turn to the $L^\infty$ estimates on those derivatives of $\varphi$ involving $\Lb$ directions and this process relies on the original equation \eqref{Main Equation}. We first rewrite \eqref{Main Equation} in terms of null frame:
\begin{equation}\label{Main Equation in null frame}
 -L \Lb \varphi+\laplacianslash \varphi+\frac{1}{2r}(L\varphi-\Lb\varphi)
=\varphi(-\Lb\varphi \cdot L\varphi + |\nablaslash \varphi|^2).
\end{equation}

We first derive estimates on $\Lb \varphi$. Notice that \eqref{Main Equation in null frame} can be viewed as an ordinary differential equation (ODE) for $\Lb \varphi$ as follows, \footnote{ Since the exact numerical constants are irrelevant, we shall ignore the constants appearing as coefficients.}
\begin{equation*}
 L (\Lb \varphi) = a \cdot \Lb \varphi+ b,
\end{equation*}
where
\begin{equation*}
 a = -(\frac{1}{2r}- \varphi L\varphi),\,\,\,
  b  = \frac{1}{2r}L\varphi +\laplacianslash \varphi-\varphi |\nablaslash\varphi|^2.
\end{equation*}
According to \eqref{L infinity on C0 for L or nablaslash directions}, we derive immediately,
\begin{equation*}
\|a\|_{L^{\infty}(C_{u_{0}})}  \lesssim \delta^{-\frac{1}{2}}, \,\,
\|b\|_{L^{\infty}(C_{u_{0}})}  \lesssim \delta^{-\frac{1}{2}}.
\end{equation*}
Observe that
\begin{equation*}
L |\Lb \varphi|\leq| L (\Lb \varphi)| \leq|a| \cdot |\Lb \varphi|+ |b|.
\end{equation*}
In view of the fact that $\Lb \varphi \equiv 0$ on $S_{\ub,u}$, by Gronwall's inequality, we have
\begin{equation*}
 |\Lb \varphi (u)| \lesssim \int_0^\delta e^{-A(\tau)} b(\tau)d\tau \lesssim \delta^{\frac{1}{2}}.
\end{equation*}
Hence,
 \begin{equation}\label{L infinity on Lb phi on C_0}
 \|\Lb\varphi\|_{L^{\infty}(C_{u_{0}})} \lesssim \delta^{\frac{1}{2}}.
 \end{equation}

We proceed to the estimates on $\Lb \nablaslash \varphi$. We shall first commute \eqref{Main Equation} with $\Omega$, that is, take $n=1$ in \eqref{commute n Omega with main equation}. In null frame, we rewrite the equation as
 \begin{equation*}
-L\Lb\Omega\varphi+\laplacianslash\Omega \varphi +\frac{1}{2r}(L\Omega\varphi-\Lb\Omega\varphi) = 2 \varphi Q_{0}(\nabla\Omega\varphi, \nabla\varphi)+ \Omega\varphi Q_{0}(\nabla\varphi, \nabla\varphi).
 \end{equation*}
Similarly as above, by Gronwall's inequality, we obtain
$\|\Lb\Omega\varphi\|_{L^{\infty}(C_{u_{0}})}\lesssim \delta^{\frac{1}{2}}.$
Hence, according to \eqref{Compare Omega and nablaslash},
\begin{equation}\label{L infinity on Lb nablaslash phi on C_0}
\|\Lb\nablaslash\varphi\|_{L^{\infty}(C_{u_{0}})}\lesssim \delta^{\frac{1}{2}}.
 \end{equation}

Similarly, we commute \eqref{Main Equation} with two $\Omega$'s to derive
\begin{equation}\label{L infinity on Lb nablaslash nablaslash phi on C_0}
\|\Lb\nablaslash^2 \varphi\|_{L^{\infty}(C_{u_{0}})}\lesssim \delta^{\frac{1}{2}}.
 \end{equation}

\begin{remark}[Key Ingredient]
To obtain a long time existence theorem for \eqref{Main Equation}, we have to derive estimates on $\varphi$ (as well as on its derivatives). Those estimates must be valid on the initial hypersurface and those estimates should propagate along the evolution of \eqref{Main Equation}. For this purpose, we shall use a slightly weaker estimate for $\nablaslash^k \varphi$ than those in \eqref{L infinity on C0 for L or nablaslash directions},
\begin{equation*}
  \|\nablaslash^{k+1}\varphi\|_{L^{\infty}(C_{u_{0}})} \lesssim_k 1.
\end{equation*}
One expects it should be much easier to prove this estimate propagating along the flow of \eqref{Main Equation} than the original one in \eqref{L infinity on C0 for L or nablaslash directions}. This is the relaxation mentioned in the introduction.
\end{remark}

To summarize, on the initial null hypersurface $C_{u_0}$, under the initial assumption \eqref{precise short pulse data on C_0}, up to third derivatives of $\varphi$ (which turns to be the minimal number of derivatives needed to proceed an bootstrap argument in next section), we have the following $L^\infty$ estimates,
\begin{equation}\label{L infinity on initial surface 1}
\begin{split}
 \|L\nablaslash^{k} \varphi\|_{L^{\infty}(C_{u_{0}})}  &\lesssim \delta^{-\frac{1}{2}},\\
 \|\nablaslash^{k+1} \varphi\|_{L^{\infty}(C_{u_{0}})} &\lesssim 1,\\
 \|\Lb\nablaslash^{k}\varphi\|_{L^{\infty}(C_{u_{0}})} &\lesssim \delta^{\frac{1}{2}}.
\end{split}
\end{equation}
for $k=0, 1, 2,$ and
\begin{equation}\label{L infinity on initial surface 2}
\begin{split}
 \|L^{2}\varphi\|_{L^{\infty}(C_{u_{0}})} &\lesssim \delta^{-\frac{3}{2}}, \\
 \|L^{2}\nablaslash \varphi\|_{L^{\infty}(C_{u_{0}})} &\lesssim \delta^{-\frac{3}{2}},\\
 \|L^{2}\nablaslash^{2} \varphi\|_{L^{\infty}(C_{u_{0}})} &\lesssim \delta^{-\frac{3}{2}}.
 \end{split}
\end{equation}
From \eqref{L infinity on initial surface 1} and \eqref{L infinity on initial surface 2}, we derive immediately $L^2$ estimates (observe that the area of $C_{u_0}$ is comparable to $\delta$):
\begin{equation}\label{L2 on initial surface 1}
\begin{split}
 \|L\nablaslash^{k} \varphi\|_{L^2(C_{u_{0}})}  &\lesssim 1, \\
 \|\nablaslash^{k+1} \varphi\|_{L^{2}(C_{u_{0}})}  &\lesssim \delta^{\frac{1}{2}}.
 \end{split}
\end{equation}
for $k=0, 1, 2,$ and
\begin{equation}\label{L2 on initial surface 2}
\begin{split}
 \|L^{2}\varphi\|_{L^{2}(C_{u_{0}})} &\lesssim \delta^{-1}, \\
 \|L^{2}\nablaslash \varphi\|_{L^{2}(C_{u_{0}})} &\lesssim \delta^{-1},\\
 \|L^{2}\nablaslash^{2} \varphi\|_{L^{2}(C_{u_{0}})} &\lesssim \delta^{-1}.
\end{split}
\end{equation}

In the next section, we shall show that, up to a universal constant, the estimates in \eqref{L2 on initial surface 1} and \eqref{L2 on initial surface 2}  will hold on all later outgoing null hypersurfaces $C_u$ where $-1> u > u_0$ provided the solution of \eqref{Main Equation} can be constructed up to $C_u$.

\section{A Priori Estimates}\label{Section A priori Estimates}
We assume that there is a solution of \eqref{Main Equation} defined on the domain $\mathcal{D}_{u,\ub}$ which is enclosed by the null hypersurfaces $C_{u}$, $\Cb_{\ub}$, $C_{u_0}$ and $\Cb_0$. The goal is to show that estimates \eqref{L2 on initial surface 1} and \eqref{L2 on initial surface 2}, which are valid on $C_{u_0}$, also hold on later $C_u$. We start by defining a family of energy norms. For this purpose, we slightly abuse the notations: we use $C_u$ to denote $C^{[0,\ub]}_u$ and $\Cb_{\ub}$ to denote $\Cb^{[u_0,u]}_{\ub}$, by definition,
\begin{equation*}
C^{[0,\ub]}_u = \{ p \in C_u | 0 \leq \ub(p) \leq \ub\} \quad \text{and} \quad \Cb^{[u_0,u]}_{\ub} = \{ p \in \Cb_{\ub} | u_0 \leq u(p) \leq u\}.
\end{equation*}
We define
\begin{equation}
\begin{split}
E_1(u,\ub) &= \|L \varphi\|_{L^2(C_u)} + \delta^{-\frac{1}{2}}\|\nablaslash \varphi\|_{L^2(C_u)},\\
\Eb_1(u, \ub) &=\|\nablaslash \varphi\|_{L^2(\Cb_{\ub})} + \delta^{-\frac{1}{2}} \|\Lb \varphi\|_{L^2(\Cb_{\ub})},\\
E_2(u,\ub) &=\|L \nablaslash \varphi\|_{L^2(C_u)} + \delta^{-\frac{1}{2}} \|\nablaslash^2 \varphi\|_{L^2(C_u)},\\
\Eb_2(u, \ub) &=  \|\nablaslash^2 \varphi\|_{L^2(\Cb_{\ub})} + \delta^{-\frac{1}{2}} \|\Lb \nablaslash \varphi\|_{L^2(\Cb_{\ub})},\\
E_3(u,\ub) &= \|L \nablaslash^2 \varphi\|_{L^2(C_u)} + \delta^{-\frac{1}{2}} \|\nablaslash^3 \varphi\|_{L^2(C_u)},\\
\Eb_3(u, \ub) &= \| \nablaslash^3 \varphi\|_{L^2(\Cb_{\ub})} + \delta^{-\frac{1}{2}}\|\Lb \nablaslash^2 \varphi\|_{L^2(\Cb_{\ub})}.
\end{split}
\end{equation}
and remember that $|\Omega \varphi| \sim |\nablaslash \varphi|$ on $\mathcal{D}_{u,\ub}$.

We also need another family of norms which involves at least two null derivatives. They are defined as follows:
\begin{equation}
\begin{split}
F_2(u,\ub) &=\delta\|L^2 \varphi\|_{L^2(C_u)}, \\
\Fb_2(u, \ub) &=\|\Lb^2 \varphi\|_{L^2(\Cb_{\ub})},\\
F_3(u,\ub) &=\delta \|L^2 \nablaslash \varphi\|_{L^2(C_u)},\\
\Fb_3(u, \ub) &= \|\Lb^2 \nablaslash \varphi\|_{L^2(\Cb_{\ub})}.
\end{split}
\end{equation}
We shall prove the following propagation estimates,
\begin{Main A priori Estimates}\label{a priori estimates} If $\delta$ is sufficiently small, for all initial data of \eqref{Main Equation} and all $I \in \mathbb{R}_{>0}$ which satisfy
\begin{equation}\label{initial bound}
E_1(u_0,\delta) + E_2(u_0,\delta)+E_3(u_0,\delta)+ F_2(u_0,\delta) + F_3(u_0,\delta)\leq I,
\end{equation}
there is a constant $C(I)$ depending only on $I_3$ (in particular, not on $\delta$), so that
\begin{equation}\label{main estimates}
\sum_{i=1}^3 [E_i(u,\ub) + \Eb_i(u, \ub)]+\sum_{j=2}^3 [F_j(u,\ub) + \Fb_j(u, \ub)] \leq C(I),
\end{equation}
for all $u \in [u_0, u^*]$ and $\ub \in [0, \ub^*]$ where $u_0 \leq u^* \leq -1$ and $0\leq \ub^* \leq \delta$.
\end{Main A priori Estimates}

\subsection{Bootstrap Assumption}
We shall use a standard bootstrap argument to prove the \textbf{Main A priori Estimates}. We assume that
\begin{equation}\label{bootstrap assumption}
\sum_{i=1}^3 [E_i(u,\ub) + \Eb_i(u, \ub)]+\sum_{j=2}^3 [F_j(u,\ub) + \Fb_j(u, \ub)] \leq M,
\end{equation}
for all $u \in [u_0, u^*]$ and $\ub \in [0, \ub^*]$, where $M$ is a sufficiently large constant. Since we have assumed the existence of the solution up to  $C_{u^*}$ and $\Cb_{\ub^*}$, we can always choose such a $M$ which may depend on the $\varphi$. At the end of the current section, we will show that we can actually choose $M$ so that it depends only on the norm of the initial data but not the profile. This will yield the \textbf{Main A priori Estimates}.

\subsection{Preliminary Estimates}\label{section Preliminary Estimates}
Under the bootstrap assumption \eqref{bootstrap assumption}, we first derive $L^\infty$ for one derivatives of $\varphi$. As a byproduct, we will also obtain the $L^4$ estimates for two derivatives of $\varphi$. For this purpose, we shall repeatedly use the Sobolev inequalities stated in Section \ref{Soboleve and Gronwall}.

We start with $L \varphi$. According to Sobolev inequalities, we have
\begin{align*}
 |u|^{\frac{1}{4}}\| L\varphi \|_{L^{4}(S_{\ub,u})} &\lesssim  \| L^{2}\varphi\|^{\frac{1}{2}}_{L^{2}(C_{u})}(\| L\varphi \|^{\frac{1}{2}}_{L^{2}(C_{u})}
 +|u|^{\frac{1}{2}}\| L\nablaslash\varphi \|_{L^{2}(C_{u})})\\
 &\lesssim (\delta^{-1} M )^\frac{1}{2}(M^\frac{1}{2}+ M^\frac{1}{2}).
 \end{align*}
Hence,
\begin{equation}\label{e 1}
\| L\varphi \|_{L^{4}(S_{\ub,u})}\lesssim \delta^{-\frac{1}{2}} M.
\end{equation}
Similarly, we have
\begin{equation}\label{e_2}
\| L\nablaslash \varphi \|_{L^{4}(S_{\ub,u})} \lesssim \delta^{-\frac{1}{2}} M.
\end{equation}
We also have
\begin{equation*}
\begin{split}
   \|L\varphi\|_{L^\infty(S_{\ub, u})} &\lesssim \| L^2\varphi \|^{\frac{1}{2}}_{L^{2}(C_{u})}\| L\varphi \|^{\frac{1}{2}}_{L^{2}(C_{u})}+\|\nablaslash L^2\varphi \|^{\frac{1}{2}}_{L^{2}(C_{u})}\| \nablaslash L\varphi \|^{\frac{1}{2}}_{L^{2}(C_{u})})\\
&\lesssim \delta^{-\frac{1}{2}}M.
\end{split}
\end{equation*}
i.e.
\begin{equation}\label{e_3}
 \|L\varphi\|_{L^\infty}\lesssim  \delta^{-\frac{1}{2}} M.
 \end{equation}

We now treat $\nablaslash \varphi$. According to Sobolev inequalities, we have
\begin{equation*}
\begin{split}
 \| \nablaslash \varphi \|_{L^{4}(S_{\ub,u})} &\lesssim  \| L \nablaslash\varphi\|^{\frac{1}{2}}_{L^{2}(C_{u})}(\| \nablaslash \varphi \|^{\frac{1}{2}}_{L^{2}(C_{u})}
 +\| \nablaslash^2 \varphi \|_{L^{2}(C_{u})})\\
 &\lesssim M^\frac{1}{2}((\delta^{\frac{1}{2}}M)^\frac{1}{2}+ (\delta^{\frac{1}{2}}M)^\frac{1}{2}).
\end{split}
\end{equation*}
Thus,
\begin{equation}\label{e_4}
\| \nablaslash\varphi\|_{L^{4}(S_{\ub,u})}\lesssim \delta^{\frac{1}{4}} M.
\end{equation}
Similarly,
\begin{equation}\label{e_5}
\| \nablaslash^2 \varphi\|_{L^{4}(S_{\ub,u})}\lesssim \delta^{\frac{1}{4}} M.
\end{equation}
And
\begin{equation*}
   \|\nablaslash\varphi\|_{L^\infty(S_{\ub, u})} \lesssim \| \nablaslash L\varphi \|^{\frac{1}{2}}_{L^{2}(C_{u})}\| \nablaslash\varphi \|^{\frac{1}{2}}_{L^{2}(C_{u})}+\|\nablaslash^2 L\varphi \|^{\frac{1}{2}}_{L^{2}(C_{u})}\|\nablaslash^2\varphi \|^{\frac{1}{2}}_{L^{2}(C_{u})}.
\end{equation*}
gives
\begin{equation}\label{e_6}
 \|\nablaslash \varphi\|_{L^\infty} \lesssim \delta^{\frac{1}{4}} M.
 \end{equation}

Finally, we turn to the estimates on $\Lb\varphi$. For $\|\Lb \varphi\|_{L^4}$, we have
\begin{equation*}
\begin{split}
 \|\Lb\varphi \|_{L^{4}(S_{\ub,u})} &\lesssim \|\Lb\varphi\|_{L^{4}(S_{\ub,u_{0}})}+ \| \Lb^2\varphi \|^{\frac{1}{2}}_{L^{2}(\Cb_{\ub})}(\| \Lb\varphi \|^{\frac{1}{2}}_{L^{2}(\Cb_{\ub})}
 +\|\nablaslash\Lb\varphi\|^{\frac{1}{2}}_{L^{2}(\Cb_{\ub})})\\
&\lesssim \delta^{\frac{1}{2}} + \delta^{\frac{1}{4}} M
\end{split}
\end{equation*}
If $\delta$ is sufficiently small, we obtain
\begin{equation}\label{e_7}
\| \Lb\varphi \|_{L^{4}(S_{\ub,u})} \lesssim \delta^{\frac{1}{4}} M.
\end{equation}
Similarly, we also obtain
\begin{equation}\label{e_8}
\| \Lb\nablaslash\varphi \|_{L^{4}(S_{\ub,u})}\lesssim \delta^{\frac{1}{4}} M.
\end{equation}
And
\begin{equation*}
    \begin{split}
       \|\Lb\varphi\|_{L^\infty(S_{\ub, u})} & \lesssim \|\nablaslash\Lb\varphi \|_{L^{2}(S_{\ub,u_0})}+\|\Lb\varphi \|_{L^{2}(S_{\ub,u_0})}\\
         &+\| \Lb^2\varphi \|^{\frac{1}{2}}_{L^{2}(\Cb_{\ub})}\| \Lb\varphi \|^{\frac{1}{2}}_{L^{2}(\Cb_{\ub})}+\|\nablaslash \Lb^2\varphi \|^{\frac{1}{2}}_{L^{2}(\Cb_{\ub})}\| \nablaslash\Lb\varphi \|^{\frac{1}{2}}_{L^{2}(\Cb_{\ub})}\\
&\lesssim \delta^{\frac{1}{2}} + \delta^{\frac{1}{2}} + \delta^{\frac{1}{4}}M.
     \end{split}
\end{equation*}
If $\delta$ is sufficiently small, we obtain
\begin{equation}\label{e_9}
\| \Lb\varphi\|_{L^{\infty}} \lesssim \delta^{\frac{1}{4}} M.
\end{equation}
We summarize all the estimates in the following proposition.
\begin{proposition}\label{proposition L infinity and L4 estimates} Under the bootstrap assumption \eqref{bootstrap assumption}, if $\delta$ is sufficiently small, we have
\begin{equation*}
\begin{split}
 &\quad \delta^{\frac{1}{2}} \|L\varphi\|_{L^\infty} + \delta^{-\frac{1}{4}} \|\nablaslash\varphi\|_{L^\infty} +  \delta^{-\frac{1}{4}} \|\Lb\varphi\|_{L^\infty}\\
&+\delta^{\frac{1}{2}} \| L\nablaslash \varphi \|_{L^{4}(S_{\ub,u})} +\delta^{-\frac{1}{4}} \|\nablaslash^2 \varphi\|_{L^{4}(S_{\ub,u})} + \delta^{-\frac{1}{4}} \|\Lb\nablaslash\varphi \|_{L^{4}(S_{\ub,u})}\\
&+\delta^{\frac{1}{2}} \| L\varphi \|_{L^{4}(S_{\ub,u})} +\delta^{-\frac{1}{4}}  \|\nablaslash \varphi\|_{L^{4}(S_{\ub,u})} + \delta^{-\frac{1}{4}} \|\Lb\varphi \|_{L^{4}(S_{\ub,u})}\lesssim M.
\end{split}
\end{equation*}
\end{proposition}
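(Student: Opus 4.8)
The statement to be proved is Proposition \ref{proposition L infinity and L4 estimates}, the collection of $L^\infty$ and $L^4(S_{\ub,u})$ estimates for first and second derivatives of $\varphi$ under the bootstrap assumption \eqref{bootstrap assumption}. The strategy is entirely a bookkeeping exercise: feed the bootstrap bounds on the energies $E_i,\Eb_i,F_j,\Fb_j$ into the Sobolev inequalities of Propositions \ref{sobolev on cu} and \ref{sobolev on cbub} (and their $L^\infty$ corollaries), being careful to track the powers of $\delta$ and of $M$ and to exploit the asymmetry of the norms $E_i$ (which weight $\nablaslash$ by $\delta^{-1/2}$) and $\Eb_i$ (which weight $\Lb$ by $\delta^{-1/2}$). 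In the region under consideration $|u|\sim r\sim 1$, so all the $|u|$-weights in the Sobolev inequalities are harmless and can be dropped.

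First I would handle the outgoing-cone quantities $L\varphi$, $L\nablaslash\varphi$ and $\nablaslash\varphi$, $\nablaslash^2\varphi$, since these only require Proposition \ref{sobolev on cu} (no boundary term on $S_{\ub,u_0}$). For $L\varphi$: apply the first inequality of Proposition \ref{sobolev on cu} with $\phi = L\varphi$, bounding $\|L^2\varphi\|_{L^2(C_u)}\lesssim \delta^{-1}M$ from $F_2$, and $\|L\varphi\|_{L^2(C_u)}, \|L\nablaslash\varphi\|_{L^2(C_u)}\lesssim M$ from $E_1,E_2$; this yields \eqref{e 1}. The $L^\infty$ bound \eqref{e_3} comes from the first $L^\infty$ corollary, now also using $\|\nablaslash L^2\varphi\|_{L^2(C_u)}\lesssim\delta^{-1}M$ from $F_3$ and $\|\nablaslash L\varphi\|_{L^2(C_u)}\lesssim M$ from $E_2,E_3$. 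Note the one subtlety here: $\phi=L\varphi$ must vanish on $S_{0,u}$, which holds because the data is trivial for $\ub\le 0$ and hence $\varphi$ (to infinite order) equals a constant on $\Cb_0$, so $L\varphi|_{S_{0,u}}=0$. For $\nablaslash\varphi$ one takes $\phi=\nablaslash\varphi$ and now the gain is in $\delta$: $\|\nablaslash\varphi\|_{L^2(C_u)}, \|\nablaslash^2\varphi\|_{L^2(C_u)}\lesssim\delta^{1/2}M$ from $E_1,E_2$ while $\|L\nablaslash\varphi\|_{L^2(C_u)}\lesssim M$, giving the $\delta^{1/4}$ gains in \eqref{e_4} and \eqref{e_6}. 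The second-derivative $L^4$ bounds \eqref{e_2}, \eqref{e_5} are entirely parallel, using $E_3$ and $F_3$.

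Next I would treat the incoming-cone quantities $\Lb\varphi$, $\Lb\nablaslash\varphi$, which require Proposition \ref{sobolev on cbub} and therefore carry a boundary term on $S_{\ub,u_0}$. The point is that on $C_{u_0}$ these boundary terms are controlled by the \emph{initial-data} estimates \eqref{L infinity on Lb phi on C_0}, \eqref{L infinity on Lb nablaslash phi on C_0}, \eqref{L infinity on Lb nablaslash nablaslash phi on C_0}, all of which are $O(\delta^{1/2})$ — genuinely small, not just bootstrap-controlled. So for $\Lb\varphi$ one gets $\|\Lb\varphi\|_{L^4(S_{\ub,u})}\lesssim \delta^{1/2} + \|\Lb^2\varphi\|_{L^2(\Cb_{\ub})}^{1/2}(\|\Lb\varphi\|_{L^2(\Cb_{\ub})}^{1/2}+\|\nablaslash\Lb\varphi\|_{L^2(\Cb_{\ub})}^{1/2})$, where $\|\Lb^2\varphi\|_{L^2(\Cb_{\ub})}\lesssim M$ from $\Fb_2$, $\|\Lb\varphi\|_{L^2(\Cb_{\ub})}\lesssim\delta^{1/2}M$ from $\Eb_1$, and $\|\nablaslash\Lb\varphi\|_{L^2(\Cb_{\ub})}\lesssim\delta^{1/2}M$ from $\Eb_2$, producing $\delta^{1/2}+\delta^{1/4}M$; absorbing the first term for $\delta$ small gives \eqref{e_7}. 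The $L^\infty$ estimate \eqref{e_9} uses in addition the $S_{\ub,u_0}$-terms $\|\nablaslash\Lb\varphi\|_{L^2(S_{\ub,u_0})}$ and $\|\Lb\varphi\|_{L^2(S_{\ub,u_0})}$, again $O(\delta^{1/2})$ by the initial estimates, plus $\|\nablaslash\Lb^2\varphi\|_{L^2(\Cb_{\ub})}\lesssim M$ from $\Fb_3$. The bound \eqref{e_8} for $\Lb\nablaslash\varphi$ is the same, one order of angular derivative higher, using $\Eb_2,\Eb_3,\Fb_3$. Finally one assembles all nine displayed estimates into the single inequality of the proposition.

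The proof presents no real obstacle; the only thing requiring genuine care is that the powers of $\delta$ must come out exactly as claimed so that in the bootstrap closure later the constant $C(I)$ can be made $\delta$-independent — in particular one must not squander the $\delta^{1/2}$ smallness of the angular/incoming norms, and one must check that each application of the Sobolev inequalities is to a function with the correct vanishing property ($\phi$ vanishing on $S_{0,u}$ for Proposition \ref{sobolev on cu}, trivially satisfied for $L\varphi,L\nablaslash\varphi,\nablaslash\varphi,\nablaslash^2\varphi$ on $C_u$ since $\varphi$ is constant to infinite order on $\Cb_0$). The mild annoyance is that the second inequality of Proposition \ref{sobolev on cu} (the $L^2(S_{\ub,u})$ estimate) does not obviously apply to $\nablaslash\varphi$ directly — but one never needs it in that form, since the $L^\infty$ corollaries already package everything needed.
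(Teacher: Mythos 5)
Your proposal is correct and follows essentially the same route as the paper: apply Proposition \ref{sobolev on cu} (and its $L^\infty$ corollary) to $L\varphi$, $L\nablaslash\varphi$, $\nablaslash\varphi$, $\nablaslash^2\varphi$ with the bootstrap bounds on $E_i$, $F_j$, and Proposition \ref{sobolev on cbub} to $\Lb\varphi$, $\Lb\nablaslash\varphi$ with the $S_{\ub,u_0}$ boundary terms controlled by the $O(\delta^{1/2})$ initial-data estimates and the bulk terms by $\Eb_i$, $\Fb_j$, absorbing the boundary contribution for $\delta$ small. The powers of $\delta$ and $M$ you track ($\delta^{-1/2}M$ for the $L$-quantities, $\delta^{1/4}M$ for the angular and $\Lb$-quantities) agree exactly with the paper's \eqref{e 1}--\eqref{e_9}.
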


We observe that $L^\infty$ estimates on $\Lb \varphi$ (which of order $\delta^{\frac{1}{4}}$) is certainly worse than the initial estimates of $\Lb\varphi$ on $C_{u_0}$ (which is of order $\delta^{\frac{1}{2}}$). To rectify this loss in the future, we need $L^2$ estimates of $\Lb \varphi$ on $C_u$ (instead of $\Cb_{\ub}$ appearing in the definition of $\Eb_1(u, \ub)$).

\begin{lemma}\label{lemma L2 of Lb phi}
 Under the bootstrap assumption \eqref{bootstrap assumption}, if $\delta$ is sufficiently small, for $i=0$ or $1$ (not for $i=2$), we have
 \begin{equation*}
\|\Lb\Omega^i\varphi\|_{L^2(C_u)} \lesssim \delta M.
\end{equation*}
\end{lemma}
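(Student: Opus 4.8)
The plan is to integrate the wave equation in null frame along the incoming $L$-direction (i.e., along the generators of $C_u$), treating it as an ODE for $\Lb\varphi$ exactly as in the derivation of the initial estimate \eqref{L infinity on Lb phi on C_0}. Concretely, for $i=0$ we start from \eqref{Main Equation in null frame}, which we read as
\begin{equation*}
L(\Lb\varphi) = a\cdot\Lb\varphi + b, \qquad a = -\tfrac{1}{2r}+\varphi L\varphi,\quad b = \tfrac{1}{2r}L\varphi + \laplacianslash\varphi - \varphi|\nablaslash\varphi|^2,
\end{equation*}
and integrate along each generator of $C_u$ starting from $S_{0,u}$, where $\Lb\varphi \equiv 0$ (since the data on $\Cb_0$ vanishes to infinite order). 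Since $\ub$ ranges over $[0,\delta]$ with $\delta$ tiny and $r\sim 1$, Gronwall's inequality \eqref{Gronwall} gives a pointwise bound $|\Lb\varphi|(\ub,u,\theta) \lesssim \int_0^{\ub}|b|\,d\ub'$, and then taking the $L^2(C_u)$ norm and using Minkowski's (or Cauchy–Schwarz in $\ub'$) yields $\|\Lb\varphi\|_{L^2(C_u)} \lesssim \delta^{1/2}\|b\|_{L^2(C_u)}$ (the extra $\delta^{1/2}$ coming from the short $\ub$-interval).

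The heart of the matter is estimating $\|b\|_{L^2(C_u)}$ and showing it is $\lesssim \delta^{1/2}M$, which will give the claimed $\delta M$. The linear term $\tfrac{1}{2r}L\varphi$ contributes $\|L\varphi\|_{L^2(C_u)} \lesssim E_1 \lesssim M$ — but this alone would only give $\delta^{1/2}M$, not $\delta M$, so one must be more careful: I expect the point is that $L\varphi$, while large, is multiplied against $\Lb\varphi$'s smallness only indirectly, and one should instead observe that $\|\nablaslash\varphi\|_{L^2(C_u)} \lesssim \delta^{1/2}M$ and $\|\laplacianslash\varphi\|_{L^2(C_u)} = \|\nablaslash^2\varphi\|_{L^2(C_u)}\lesssim \delta^{1/2}M$ from the definitions of $E_2, E_3$, so the angular terms in $b$ are already $\delta^{1/2}M$. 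The genuinely dangerous term is $\tfrac{1}{2r}L\varphi$, of size only $M$ in $L^2(C_u)$; here one should re-expand using the equation once more or exploit that on $C_{u_0}$ we have $\|L\varphi\|_{L^2}\lesssim 1$ rather than $M$ — more likely, the resolution is that the term $\tfrac{1}{r}L\varphi$ should be grouped with $L(\Lb\varphi)$ as $L(\Lb\varphi) - \tfrac{1}{r}L\varphi$ wait, that is not a clean total derivative; alternatively one writes $\tfrac{1}{2r}(L\varphi - \Lb\varphi)$ and notes the $r$-weight plus the bound $\|L\varphi\|_{L^4(S)}\lesssim \delta^{-1/2}M$ combined with the $\delta^{1/2}$ area factor. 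I would carry out the $L^2$ reduction carefully, isolating $\tfrac{1}{2r}L\varphi$, and expect that this term is handled by first bounding $\Lb\varphi$ crudely by $\delta M$ via the other terms, then feeding the resulting bound back — i.e., running a short bootstrap within the proof — or by using the sharper $L^\infty$-bound $\|L\varphi\|_{L^\infty}\lesssim\delta^{-1/2}M$ against the measure $|C_u|\sim\delta$ to get $\|L\varphi\|_{L^2(C_u)}\lesssim\delta^{1/2}\cdot\delta^{-1/2}M = M$, which still forces a second iteration.

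For $i=1$ the argument is identical but applied to the $\Omega$-commuted equation: commuting \eqref{Main Equation} once with $\Omega$ gives, in null frame, an ODE of the form $L(\Lb\Omega\varphi) = a\cdot\Lb\Omega\varphi + b_1$ where $b_1$ collects $\tfrac{1}{2r}L\Omega\varphi$, $\laplacianslash\Omega\varphi$, and the commuted nonlinearity $2\varphi Q_0(\nabla\Omega\varphi,\nabla\varphi) + \Omega\varphi\, Q_0(\nabla\varphi,\nabla\varphi)$, together with the curvature-commutator terms $\tfrac{1}{r^2}(\Lb-L)\Omega\varphi$ and $\tfrac{1}{r}\laplacianslash\Omega\varphi$ from \eqref{commutates}. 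Using the null form bound \eqref{null form bound} — crucially the absence of $L\varphi\cdot L\varphi$ — together with Proposition \ref{proposition L infinity and L4 estimates} and the energy definitions $E_2, E_3$, each piece of $b_1$ is $\lesssim \delta^{1/2}M$ in $L^2(C_u)$, so Gronwall again yields $\|\Lb\Omega\varphi\|_{L^2(C_u)}\lesssim\delta M$. The reason $i=2$ is excluded is transparent from this: it would require $L^2(C_u)$ control of three angular derivatives of $\varphi$ (from $\laplacianslash\Omega^2\varphi$) and of $L^2\nablaslash^2\varphi$-type terms that only enter at the $F_3$ level with a $\delta$-weight, so the nonlinear terms can no longer be absorbed with a gain.

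The main obstacle, as indicated, is the term $\tfrac{1}{2r}L\varphi$ (and its $\Omega$-analogue) in the source: naively it only gives $\|\Lb\varphi\|_{L^2(C_u)}\lesssim\delta^{1/2}M$, half a power of $\delta$ short of the claim. I expect the fix is a two-step argument — first establish the crude bound, then substitute it into the $a\cdot\Lb\varphi$ feedback term and re-examine whether $\tfrac{1}{2r}L\varphi$ can be traded, via the equation \eqref{Main Equation in null frame} solved for $L\varphi$ in terms of $L\Lb\varphi$ and lower-order pieces, for quantities carrying the extra $\delta^{1/2}$ — or, more simply, that the problem statement's constant $\delta M$ is what one actually gets after noting that $\|L\varphi\|_{L^2(C_{u_0})}\lesssim 1$ is propagated (this is precisely what the Main A priori Estimates will eventually guarantee, but cannot be assumed here), so within the bootstrap one must instead exploit the refined structure of $b$. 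Everything else — the Gronwall step, the $L^\infty$ and $L^4$ inputs, the null-form cancellation — is routine given the machinery already set up in Sections \ref{Section Preliminaries}–\ref{Section A priori Estimates}.
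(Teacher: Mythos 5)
Your overall starting point (read \eqref{Main Equation in null frame} as a transport equation for $\Lb\varphi$ along the generators of $C_u$, use the vanishing on $S_{0,u}$, Gronwall, then take $L^2(C_u)$ norms) is viable, and it is genuinely different from the paper's argument. But the proposal as written contains a quantitative slip that derails it: the gain from the short $\ub$-interval is a full power of $\delta$, not $\delta^{\frac12}$. Indeed, Gronwall gives the pointwise bound $|\Lb\varphi|(\ub,\theta)\lesssim\int_0^{\ub}|b(\ub',\theta)|\,d\ub'$, and then
\begin{equation*}
\|\Lb\varphi\|_{L^2(C_u)}^2=\int_0^{\delta}\!\!\int_{S_{\ub,u}}|\Lb\varphi|^2
\lesssim \int_0^{\delta}\!\!\int_{S}\Bigl(\int_0^{\delta}|b(\ub',\theta)|\,d\ub'\Bigr)^{2}
\leq \delta\cdot\delta\,\|b\|_{L^2(C_u)}^2,
\end{equation*}
one factor of $\delta$ coming from the (trivial) outer $d\ub$ integration and one from Cauchy--Schwarz in $\ub'$. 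Hence $\|\Lb\varphi\|_{L^2(C_u)}\lesssim\delta\|b\|_{L^2(C_u)}$, and the bound $\|b\|_{L^2(C_u)}\lesssim M$ (driven by $\tfrac{1}{2r}L\varphi$ via $E_1\leq M$; the terms $\laplacianslash\varphi$ and $\varphi|\nablaslash\varphi|^2$ are even smaller) already yields $\delta M$. The ``main obstacle'' you spend the second half of the proposal on --- being half a power of $\delta$ short because of $\tfrac{1}{2r}L\varphi$ --- does not exist, and the various remedies you float (re-expanding the equation, a two-step internal bootstrap, trading $L\varphi$ for $L\Lb\varphi$) are never carried out. A proof that ends with ``I expect the fix is \dots or, more simply, that \dots'' is not a proof; as submitted the argument is incomplete even though the missing step is only a corrected count of powers of $\delta$. (Two minor further points: the $\tfrac{1}{r^2}(\Lb-L)\Omega\varphi$ terms you attribute to commuting $\Omega$ do not occur, since $[\Box,\Omega]=0$; and the Gronwall factor $e^{\int_0^\delta|a|}$ with $\|a\|_{L^\infty}\lesssim\delta^{-\frac12}M$ must be noted to be $O(1)$ for $\delta$ small, as the paper does on $C_{u_0}$.)

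For comparison, the paper avoids the pointwise transport argument altogether: it multiplies \eqref{Main Equation in null frame} by $\Lb\varphi$ and integrates over $C_u^{\ub}$, obtaining for $f^2(\ub)=\int_{C_u^{\ub}}|\Lb\varphi|^2$ the differential inequality $\tfrac{d}{d\ub}f^2\lesssim M(\delta^{-\frac12}f^2+f)$, where the dangerous term $\tfrac{1}{r}L\varphi$ contributes only $Mf$ after Cauchy--Schwarz against $\|L\varphi\|_{L^2(C_u)}\leq M$. Integrating over $[0,\ub]\subset[0,\delta]$ gives $f\lesssim M\delta^{\frac12}f+\delta M$, and absorbing the first term for $\delta$ small yields $f\lesssim\delta M$; there too the full power of $\delta$ comes from the length of the $\ub$-interval combined with the bilinear structure $f^2\lesssim(\cdots)\cdot f$. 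Either route closes; yours requires the corrected $\delta$-count above, and the paper's requires the absorption step.
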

\begin{proof} We multiply $\Lb\varphi$ on both side of the main equation \eqref{Main Equation in null frame} and integrate on $C_{u}$. In view of the fact that $\Lb\varphi \equiv 0$ on $S_{0, u}$, \eqref{null form bound} as well as $|\varphi|\equiv 1$, this leads to
 \begin{equation}\label{aa}
\begin{split}
\int_{S_{\ub,u}} \!\! |\Lb \varphi|^2 &= \int_{C_{u}^{\ub}}\! L |\underline{L}\varphi|^{2} + \frac{1}{r}|\Lb \varphi|^2\\
&\lesssim \int_{C_{u}^{\underline{u}}}\frac{1}{r}|L\varphi||\underline{L}\varphi|+ |\laplacianslash\varphi||\underline{L}\varphi| + |L\varphi||\underline{L}\varphi|^{2} + |\nablaslash\varphi|^2|\underline{L}\varphi|.
\end{split}
\end{equation}
where the integral $ \int_{C_{u}^{\underline{u}}}$ means $\int_{0}^{\ub}\int_{S_{\ub',u}} d\ub'$. Let
$f^2(\ub) = \int_{C_{u}^{\underline{u}}}(\underline{L}\varphi)^{2}.$
We now estimate the terms at the right hand side of \eqref{aa} one by one:
\begin{equation*}
\begin{split}
\int_{C_{u}^{\underline{u}}}\frac{1}{r}|L\varphi||\underline{L}\varphi| &\lesssim f(\ub)M, \\
\int_{C_{u}^{\underline{u}}}|\laplacianslash\varphi||\underline{L}\varphi| &\lesssim
 \delta^{\frac{1}{2}}f(\ub)M,\\
 \int_{C_{u}^{\underline{u}}}|L\varphi||\underline{L}\varphi|^{2} &\lesssim  \delta^{-\frac{1}{2}}f^2(\ub) M,\\
 \int_{C_{u}^{\underline{u}}}|\nablaslash\varphi|^2|\underline{L}\varphi| &\lesssim \delta^{\frac{3}{4}} f(\ub) M.
\end{split}
\end{equation*}
Back to \eqref{aa}, we have
\begin{equation*}
\frac{d}{d\underline{u}}f(\ub)^{2}\lesssim M(\delta^{-\frac{1}{2}}f(\ub)^{2}+f(\ub)),
 \end{equation*}
We then integrate on $[0,\delta]$ to derive
\begin{equation*}
f(\ub) \lesssim M \delta^{\frac{1}{2}}f(\ub)+\delta M.
 \end{equation*}
If $\delta$ is sufficiently small, the first term on the right hand side will be absorbed. This yields the desire estimate. For $\Lb\Omega\varphi$, we simply repeat the above process to get the result.
\end{proof}

\subsection{Estimates on $E_{k}$'s and $\underline{E}_{k}$'s Part-1}
We commute $\Omega^i$ (for $i=1,2$) with \eqref{Main Equation}, in view of \eqref{commute n Omega with main equation}, we have
\begin{equation*}
 \Box \Omega^{i} \varphi = \sum_{k+p+q=i} \Omega^{k}\varphi \cdot Q_{0}(\nabla\Omega^{p}\varphi, \nabla\Omega^{q}\varphi).
\end{equation*}

 We use the scheme in Section \ref{Energy estimates scheme} for this equation where we take $\phi = \Omega^i \varphi$ ($i=0, 1, 2$) and $X = L$. In view of \eqref{fundamental energy identity}, we have
\begin{equation}\label{ES-E1-E2-E3-a}
\begin{split}
\int_{C_{u}}|L \Omega^i \varphi|^{2} +\int_{\underline{C}_{\underline{u}}}|\nablaslash \Omega^i \varphi|^{2} &=\int_{C_{u_{0}}}|L\Omega^i \varphi|^{2}+\sum_{p+q=i}\doubleint_{\mathcal{D}} \varphi Q_{0}(\nabla\Omega^{p}\phi, \Omega^{q}\nabla\phi)L \Omega^i \varphi \\
&\quad + \sum_{ k+p+q=i, k\geq 1} \doubleint_{\mathcal{D}}\Omega^{k}\varphi\cdot
Q_{0}(\nabla\Omega^{p}\varphi, \nabla\Omega^{q}\varphi)L \Omega^i\varphi\\
&\quad +\doubleint_{\mathcal{D}} \frac{1}{2r} \Lb \Omega^i \varphi \cdot L \Omega^i\varphi + \doubleint_{\mathcal{D}} \frac{1}{2r} |\nablaslash \Omega^i \varphi|^2\\
&=\int_{C_{u_{0}}}|L\Omega^i \varphi|^{2} + S_1 +S_2 +S_3 +S_4,
\end{split}
\end{equation}
where the $S_j$'s are defined in the obvious way. We also recall that, for all smooth functions $\phi$, we actually have
\begin{equation*}
 \|\Omega^i\phi\|_{L^p(S_{\ub,u})} \sim \|\nablaslash^i\phi \|_{L^p(S_{\ub,u})}.
\end{equation*}

We first consider $S_1$. In view of \eqref{null form bound} and the fact that $|\varphi|\equiv 1$, we split $S_1$ into the sum
\begin{equation*}
 S_1 \lesssim S_{11}+S_{12}+S_{13}+S_{14}+S_{15},
\end{equation*}
where
\begin{equation*}
\begin{split}
S_{11} &= \doubleint_{\mathcal{D}}|\Lb\varphi||L\Omega^i \varphi|^2, \\
S_{12} &= \doubleint_{\mathcal{D}}|L\varphi||\Lb\Omega^i\varphi||L\Omega^i \varphi|, \\
S_{13} &= \doubleint_{\mathcal{D}}|\nablaslash \varphi||\nablaslash\Omega^i\varphi||L\Omega^i \varphi|,\\
S_{14} &= \doubleint_{\mathcal{D}}|\Lb\Omega\varphi||L\Omega \varphi||L\Omega^2 \varphi|, \\
S_{15} &= \doubleint_{\mathcal{D}}|\nablaslash \Omega \varphi|^2 |L\Omega^2 \varphi|.
\end{split}
\end{equation*}
where for $S_{14}$ and $S_{15}$, $i=2$. We bound those terms one by one.

For $S_{11}$, we have
\begin{align*}
 S_{11} &\leq \int_{u_0}^{u}\|\Lb\varphi\|_{L^{\infty}} (\int_{C_{u'}} |L\Omega^i\varphi|^2 )du'\\
&\lesssim \int_{u_0}^{u}\delta^{\frac{1}{4}}M \cdot M^2 du' \\
&\lesssim \delta^{\frac{1}{4}}\cdot M^3.
\end{align*}
For the last step, we have used the fact that $u_0$ is a fixed constant.

For $S_{12}$, we have
\begin{align*}
 S_{12} &\lesssim (\doubleint_{\mathcal{D}}|L\varphi|^2|L\Omega^i\varphi|^2)^{\frac{1}{2}}(\doubleint_{\mathcal{D}}|\Lb\Omega^i \varphi|^2)^{\frac{1}{2}} \\
&\approx \|L\varphi\|_{L^{\infty}}(\int_{u_0}^{u} \|L\Omega^i\varphi\|^2_{L^2(C_{u'})} du')^{\frac{1}{2}}(\int_{0}^{\ub}\|\Lb\nablaslash^i \varphi\|_{L^2(\Cb_{\ub'})}^2 d\ub')^{\frac{1}{2}}\\
&\lesssim \delta^{-\frac{1}{2}}M \cdot M \cdot \delta M \\
&\lesssim \delta^{\frac{1}{2}} \cdot M^3.
\end{align*}

For $S_{13}$, we have
\begin{align*}
 S_{13} &\leq \int_{u_0}^{u}\|\nablaslash\varphi\|_{L^{\infty}} \|L\Omega^i\varphi\|_{L^2(C_{u'})}\|\nablaslash\Omega^i\varphi\|_{L^2(C_{u'})}du'\\
 &\lesssim \int_{u_0}^{u}\delta^{\frac{1}{4}} M \cdot M \cdot \delta^{\frac{1}{2}} M du'\\
&\lesssim \delta^{\frac{3}{4}} M^3.
\end{align*}

For $S_{14}$, we have
\begin{align*}
 S_{14} &\leq (\int_{0}^{\ub} \|\Lb\Omega \varphi\|_{L^4(\Cb_{\ub'})}^4 d\ub')^{\frac{1}{4}} (\int_{u_0}^{u} \|L \Omega \varphi\|_{L^4(C_{u'})}^4 d u')^{\frac{1}{4}}(\int_{u_0}^{u} \|L \Omega^2 \varphi\|_{L^2(C_{u'})}^2 d u')^{\frac{1}{2}}.
\end{align*}
In view of \eqref{e_8}, we have
\begin{equation*}
 \|\Lb \Omega \varphi\|_{L^4{(S_{\ub,u})}} \lesssim \delta^{\frac{1}{4}} M.
\end{equation*}
Thus,
\begin{equation*}
(\int_{0}^{\ub} \|\Lb\Omega \varphi\|_{L^4(\Cb_{\ub'})}^4 d\ub')^\frac{1}{4} \lesssim (\int_{0}^{\ub} \int_{u_0}^u \delta  M^4)^{\frac{1}{4}} \lesssim \delta^{\frac{1}{2}} M.
\end{equation*}
Similarly, we have
\begin{equation*}
 (\int_{u_0}^{u} \|L \Omega \varphi\|_{L^4(C_{u'})}^4 d u')^{\frac{1}{4}} \lesssim \delta^{-\frac{1}{4}} M.
\end{equation*}
Hence,
\begin{align*}
 S_{14} &\lesssim \delta^{\frac{1}{2}} M \cdot  \delta^{-\frac{1}{4}} M \cdot  M \lesssim \delta^{\frac{1}{4}} \cdot M^3.
\end{align*}

For $S_{15}$, we can proceed in a similar manner as for $S_{14}$. This yields
\begin{align*}
 S_{15} &\leq (\int_{u_0}^{u} \|\nablaslash \Omega \varphi\|_{L^4(C_{u'})}^4 d u')^{\frac{2}{4}}(\int_{u_0}^{u} \|L \Omega^2 \varphi\|_{L^2(C_{u'})}^2 d u')^{\frac{1}{2}} \lesssim \delta \cdot M^3.
\end{align*}

We add up those estimates for $S_{1k}$'s to derive
\begin{equation*}
 S_1 \lesssim \delta^{\frac{1}{4}} \cdot M^3.
\end{equation*}

We now consider $S_2$. Due to the symmetry of the indices $p$ and $q$ in $S_2$, we always assume $p\geq q$. Because $k \geq 1$, therefore $q =0$. We can rewrite $S_2$ as
\begin{equation*}
S_2= \sum_{ k+p=i, k\geq 1} \doubleint_{\mathcal{D}}\Omega^{k}\varphi\cdot Q_{0}(\nabla\Omega^{p}\varphi, \nabla \varphi)L \Omega^i\varphi.
\end{equation*}
In view of \eqref{null form bound}, we split $S_2$ into the sum:
\begin{equation*}
 S_2 \lesssim S_{21}+S_{22}+S_{23},
\end{equation*}
where
\begin{equation*}
\begin{split}
S_{21} &= \doubleint_{\mathcal{D}}|\Omega^k \varphi||\Lb\Omega^p \varphi||L \varphi||L\Omega^i \varphi|, \\
S_{22} &= \doubleint_{\mathcal{D}}|\Omega^k \varphi||L\Omega^p \varphi||\Lb \varphi||L\Omega^i \varphi|, \\
S_{23} &= \doubleint_{\mathcal{D}}|\Omega^k \varphi||\nablaslash\Omega^p \varphi||\nablaslash \varphi||L\Omega^i \varphi|.
\end{split}
\end{equation*}
where $i=1$ or $2$. We bound those terms one by one.

For $S_{21}$, according to the values of $(k,p)$, we have three cases: $(k,p,i) = (1,0,1)$, $(1,1,2)$ or $(2,0,2)$. Thus, we can further spit $S_{21}$ into three terms according these three cases:
\begin{equation*}
 S_{21} = S_{211}+S_{212}+S_{213},
\end{equation*}
where
\begin{equation*}
\begin{split}
S_{211} &= \doubleint_{\mathcal{D}}|\Omega \varphi||\Lb \varphi||L \varphi||L\Omega \varphi|, \\
S_{212} &= \doubleint_{\mathcal{D}}|\Omega \varphi||\Lb\Omega \varphi||L \varphi||L\Omega^2 \varphi|, \\
S_{213} &= \doubleint_{\mathcal{D}}|\Omega^2 \varphi||\Lb \varphi||L \varphi||L\Omega^2 \varphi|.
\end{split}
\end{equation*}

For $S_{211}$, we have
\begin{align*}
 S_{211} &\lesssim \int_{u_0}^{u}\|\Omega\varphi\|_{L^{\infty}}\|L\varphi\|_{L^{\infty}}\|\Lb \varphi\|_{L^2(C_{u'})}\|L\Omega\varphi\|_{L^2(C_{u'})}du'\\
&\lesssim \delta^{\frac{3}{4}}\cdot M^4.
\end{align*}

For $S_{212}$, we have
\begin{align*}
 S_{212} &\lesssim \int_{u_0}^{u}\|\Omega\varphi\|_{L^{\infty}}\|L\varphi\|_{L^{\infty}}\|\Lb\Omega\varphi\|_{L^2(C_{u'})}\|L\Omega^2\varphi\|_{L^2(C_{u'})}du'\\
&\lesssim \delta^{\frac{3}{4}} \cdot M^4.
\end{align*}

For $S_{213}$, we have
\begin{align*}
 S_{213} &\lesssim \int_{u_0}^{u}\|\Lb\varphi\|_{L^{\infty}}\|L\varphi\|_{L^{\infty}}\|\Omega^2\varphi\|_{L^2(C_{u'})}\|L\Omega^2\varphi\|_{L^2(C_{u'})}du'\\
&\lesssim  \delta^{\frac{1}{4}} \int_{u_0}^{u} du' \cdot M^4 \lesssim \delta^{\frac{1}{4}} \cdot M^4.
\end{align*}

Therefore, we have
\begin{equation*}
 S_{21} \lesssim \delta^{\frac{1}{4}} \cdot M^4.
\end{equation*}

For $S_{22}$, according to the values of $(k,p)$, we also have three cases: $(k,p,i) = (1,0,1)$, $(1,1,2)$ or $(2,0,2)$ and we further spit $S_{22}$ according these three cases:
\begin{equation*}
 S_{22} = S_{221}+S_{222}+S_{223},
\end{equation*}
where
\begin{equation*}
\begin{split}
S_{221} &= \doubleint_{\mathcal{D}}|\Omega \varphi||\Lb \varphi||L \varphi||L\Omega \varphi|, \\
S_{222} &= \doubleint_{\mathcal{D}}|\Omega \varphi||L \Omega \varphi||\Lb \varphi||L\Omega^2 \varphi|, \\
S_{223} &= \doubleint_{\mathcal{D}}|\Omega^2 \varphi||\Lb \varphi||L \varphi||L\Omega^2 \varphi|.
\end{split}
\end{equation*}
We observe that $S_{221}$ and $S_{223}$ have appeared as $S_{211}$ and $S_{213}$. So we ignore them and we only bound $S_{222}$ as follows:
\begin{align*}
 S_{222} &\lesssim \int_{u_0}^{u}\|\Omega\varphi\|_{L^{\infty}}\|\Lb \varphi\|_{L^{\infty}}\|L \Omega\varphi\|_{L^2(C_{u'})}\|L\Omega^2\varphi\|_{L^2(C_{u'})}du'\\
&\lesssim \delta^{\frac{1}{2}} \cdot M^4.
\end{align*}
Therefore,
\begin{equation*}
 S_{22} \lesssim \delta^{\frac{1}{4}} \cdot M^4.
\end{equation*}

For $S_{23}$, according to the values of $(k,p,i)$, i.e. $(k,p,i) = (1,0,1)$, $(1,1,2)$ or $(2,0,2)$, we further spit $S_{23}$ into three cases:
\begin{equation*}
 S_{23} = S_{231}+S_{232}+S_{233},
\end{equation*}
where
\begin{equation*}
\begin{split}
S_{231} &= \doubleint_{\mathcal{D}}|\Omega \varphi||\nablaslash \varphi|^2 |L\Omega \varphi|, \\
S_{232} &= \doubleint_{\mathcal{D}}|\Omega \varphi||\nablaslash\Omega \varphi||\nablaslash \varphi||L\Omega^2 \varphi|, \\
S_{233} &= \doubleint_{\mathcal{D}}|\Omega^2 \varphi||\nablaslash \varphi|^2|L\Omega^2 \varphi|.
\end{split}
\end{equation*}

For $S_{231}$, we have
\begin{align*}
 S_{231} &\lesssim \int_{u_0}^{u}\|\Omega\varphi\|_{L^{\infty}}\|\nablaslash\varphi\|_{L^{\infty}}\|\nablaslash \varphi\|_{L^2(C_{u'})}\|L\Omega\varphi\|_{L^2(C_{u'})}du'\\
&\lesssim \delta  \cdot M^4.
\end{align*}

For $S_{232}$, we have
\begin{align*}
 S_{232} &\lesssim \int_{u_0}^{u}\|\Omega\varphi\|_{L^{\infty}}\|\nablaslash\varphi\|_{L^{\infty}}\|\nablaslash \Omega\varphi\|_{L^2(C_{u'})}\|L\Omega^2\varphi\|_{L^2(C_{u'})}du'\\
&\lesssim \delta  \cdot M^4.
\end{align*}

For $S_{233}$, we have
\begin{align*}
 S_{233} &\lesssim \int_{u_0}^{u} \|\nablaslash\varphi\|^2_{L^{\infty}}\|\Omega^2\varphi\|_{L^2(C_{u'})}\|L\Omega^2\varphi\|_{L^2(C_{u'})}du'\\
&\lesssim \delta  \cdot M^4.
\end{align*}
Therefore, we have
\begin{equation*}
 S_{23} \lesssim \delta  \cdot M^4.
\end{equation*}
and
\begin{equation*}
 S_{2} \lesssim \delta^{\frac{1}{4}} \cdot M^4.
\end{equation*}

For $S_3$, we have
\begin{align*}
 S_3 &\lesssim (\int_{u_0}^{u}\frac{1}{|u'|^2}\|L\Omega^i\varphi\|^2_{L^2(C_{u'})} du')^{\frac{1}{2}}(\int_{0}^{\ub}\||u'|^i\Lb\nablaslash^i \varphi\|_{L^2(\Cb_{\ub'})}^2 d\ub')^{\frac{1}{2}}
\\
&\lesssim \delta  \cdot M^2.
\end{align*}

For $S_4$, it is similar as $S_3$. We only give the result:
\begin{align*}
 S_3 &\lesssim \delta \cdot M^2.
\end{align*}

Putting those estimates in \eqref{ES-E1-E2-E3-a}, we obtain
\begin{equation*}
\sum_{i=0}^2 \left(\int_{C_{u}}|L\Omega^i\varphi|^{2}+\int_{\underline{C}_{\underline{u}}}|\nablaslash\Omega^i\varphi|^{2}\right) \lesssim I^2 + \delta^{\frac{1}{4}} \cdot M^4.
\end{equation*}
where $I$ is the size of the initial data and $M \geq 1$. We finally obtain
\begin{equation}\label{ES-E1-E2-E3-b}
\sum_{i=0}^2 \left( \|L\Omega^i\varphi\|_{L^2(C_u)} + \|\nablaslash\Omega^i \varphi\|_{L^2(\Cb_{\ub})} \right) \lesssim I + \delta^{\frac{1}{8}}\cdot M^2.
\end{equation}

\subsection{Estimates on $E_{k}$'s and $\underline{E}_{k}$'s Part-2}

For $i=1,2$, we still consider
\begin{equation*}
 \Box \Omega^{i} \varphi = \sum_{k+p+q=i} \Omega^{k}\varphi \cdot Q_{0}(\nabla\Omega^{p}\varphi, \nabla\Omega^{q}\varphi).
\end{equation*}
and we use the scheme in Section \ref{Energy estimates scheme} where we take $\phi = \Omega^i \varphi$ ($i=0, 1, 2$) and $X = \Lb$ in \eqref{fundamental energy identity}. Therefore, we have
\begin{equation}\label{ES-Eb1-Eb2-Eb3-a}
\begin{split}
\int_{C_{u}}|\nablaslash \Omega^i \varphi|^{2}+\int_{\underline{C}_{\underline{u}}}|\Lb \Omega^i \varphi|^{2} &=\int_{C_{u_{0}}}|\nablaslash \Omega^i \varphi|^{2}+\sum_{p+q=i}\doubleint_{\mathcal{D}} \varphi Q_{0}(\nabla\Omega^{p}\phi, \nabla \Omega^{q} \phi)\Lb \Omega^i \varphi \\
&\quad + \sum_{ k+p+q=i, k\geq 1} \doubleint_{\mathcal{D}}\Omega^{k}\varphi\cdot
Q_{0}(\nabla\Omega^{p}\varphi, \nabla\Omega^{q}\varphi)\Lb \Omega^i\varphi\\
&\quad-\doubleint_{\mathcal{D}} \frac{1}{2r} \Lb \Omega^i \varphi \cdot L \Omega^i\varphi - \doubleint_{\mathcal{D}} \frac{1}{2r} |\nablaslash \Omega^i \varphi|^2\\
&=\int_{C_{u_{0}}}|\nablaslash\Omega^i \varphi|^{2} + T_1 +T_2 +T_3 +T_4,
\end{split}
\end{equation}
where the $T_j$'s are defined in the obvious way.

We first consider $T_1$. In view of \eqref{null form bound} and the fact that $|\varphi|\equiv 1$, we split $T_1$ into the sum
\begin{equation*}
 T_1 \lesssim T_{11}+T_{12}+T_{13}+T_{14}+T_{15},
\end{equation*}
where
\begin{equation*}
\begin{split}
T_{11} &= \doubleint_{\mathcal{D}}|L \varphi||\Lb\Omega^i \varphi|^2, \\
T_{12} &= \doubleint_{\mathcal{D}}|\Lb\varphi||L\Omega^i \varphi||\Lb\Omega^i\varphi|, \\
T_{13} &= \doubleint_{\mathcal{D}}|\nablaslash \varphi||\nablaslash\Omega^i\varphi||\Lb\Omega^i \varphi|,\\
T_{14} &= \doubleint_{\mathcal{D}}|\Lb\Omega\varphi||L\Omega \varphi||\Lb\Omega^2 \varphi|, \\
T_{15} &= \doubleint_{\mathcal{D}}|\nablaslash \Omega \varphi|^2 |\Lb\Omega^2 \varphi|.
\end{split}
\end{equation*}
where for $T_{14}$ and $T_{15}$, $i=2$. We bound those terms one by one.

For $T_{11}$, we have
\begin{align*}
 T_{11} &\leq \int_{0}^{\delta}\|L\varphi\|_{L^{\infty}} (\int_{\Cb_{\ub'}} |\Lb\Omega^i\varphi|^2 )d\ub'\\
 &\lesssim \delta^{\frac{3}{2}}  \cdot M^3.
\end{align*}

For $T_{12}$, we have
\begin{align*}
 T_{12} &\leq \|\Lb\varphi\|_{L^{\infty}}(\int_{u_0}^{u} \|L\Omega^i\varphi\|^2_{L^2(C_{u'})} du')^{\frac{1}{2}}(\int_{0}^{\ub}\||u'|^i\Lb\nablaslash^i \varphi\|_{L^2(\Cb_{\ub'})}^2 d\ub')^{\frac{1}{2}}\\
 &\lesssim \delta^{\frac{5}{4}} \cdot M^3.
\end{align*}

For $T_{13}$, we have
\begin{align*}
 T_{13} &\leq \int_{0}^{\delta}\|\nablaslash\varphi\|_{L^{\infty}} \|\Lb\Omega^i\varphi\|_{L^2(\Cb_{\ub'})}\|\nablaslash\Omega^i\varphi\|_{L^2(\Cb_{\ub'})}d\ub'\\
 &\lesssim \delta^{\frac{7}{4}} \cdot M^3.
\end{align*}

For $T_{14}$, we have
\begin{align*}
 T_{14} &\leq (\int_{0}^{\ub} \|\Lb\Omega \varphi\|_{L^4(\Cb_{\ub'})}^4 d\ub')^{\frac{1}{4}} (\int_{u_0}^{u} \|L \Omega \varphi\|_{L^4(C_{u'})}^4 d u')^{\frac{1}{4}}(\int_{0}^{\ub} \|\Lb \Omega^2 \varphi\|_{L^2(\Cb_{\ub'})}^2 d \ub')^{\frac{1}{2}}.
\end{align*}
As we have done for $S_{14}$ in last subsection, we have
\begin{equation*}
(\int_{0}^{\ub} \|\Lb\Omega \varphi\|_{L^4(\Cb_{\ub'})}^4 d\ub')^\frac{1}{4}  \lesssim \delta^{\frac{1}{2}} M,
\end{equation*}
and
\begin{equation*}
 (\int_{u_0}^{u} \|L \Omega \varphi\|_{L^4(C_{u'})}^4 d u')^{\frac{1}{4}} \lesssim \delta^{-\frac{1}{4}} M.
\end{equation*}
Thus,
\begin{align*}
 T_{14} &\lesssim \delta^{\frac{5}{4}} \cdot M^3.
\end{align*}

For $T_{15}$, similarly, we have
\begin{align*}
 T_{15} &\leq (\int_{u_0}^{u} \|\nablaslash \Omega \varphi\|_{L^4(C_{u'})}^4 d u')^{\frac{2}{4}}(\int_{0}^{\ub} \|\Lb \Omega^2 \varphi\|_{L^2(\Cb_{\ub'})}^2 d \ub')^{\frac{1}{2}} \lesssim \delta^{\frac{5}{4}}  \cdot M^3.
\end{align*}

We turn to $T_2$. Due to symmetry considerations, we always assume $p\geq q$. Because $k \geq 1$, then $q =0$. We rewrite $T_2$ as
\begin{equation*}
T_2= \sum_{ k+p=i, k\geq 1} \doubleint_{\mathcal{D}}\Omega^{k}\varphi\cdot Q_{0}(\nabla\Omega^{p}\varphi, \nabla \varphi) \cdot \Lb \Omega^i\varphi.
\end{equation*}
In view of \eqref{null form bound}, we split $T_2$ into the sum:
\begin{equation*}
 T_2 \lesssim T_{21}+T_{22}+T_{23},
\end{equation*}
where
\begin{equation*}
\begin{split}
T_{21} &= \doubleint_{\mathcal{D}}|\Omega^k \varphi||\Lb\Omega^p \varphi||L \varphi||\Lb\Omega^i \varphi|, \\
T_{22} &= \doubleint_{\mathcal{D}}|\Omega^k \varphi||L\Omega^p \varphi||\Lb \varphi||\Lb\Omega^i \varphi|, \\
T_{23} &= \doubleint_{\mathcal{D}}|\Omega^k \varphi||\nablaslash\Omega^p \varphi||\nablaslash \varphi||\Lb\Omega^i \varphi|.
\end{split}
\end{equation*}
where $i=1$ or $2$ and $k+p=i$. We bound those terms one by one.

For $T_{21}$, according to the values of $(k,p,i)$, we have three cases: $(k,p,i) = (1,0,1)$, $(1,1,2)$ or $(2,0,2)$. Thus, we can further spit $T_{21}$ into three terms according these three cases:
\begin{equation*}
 T_{21} = T_{211}+T_{212}+T_{213},
\end{equation*}
where
\begin{equation*}
\begin{split}
T_{211} &= \doubleint_{\mathcal{D}}|\Omega \varphi||\Lb \varphi||L \varphi||\Lb\Omega \varphi|, \\
T_{212} &= \doubleint_{\mathcal{D}}|\Omega \varphi||\Lb\Omega \varphi||L \varphi||\Lb\Omega^2 \varphi|, \\
T_{213} &= \doubleint_{\mathcal{D}}|\Omega^2 \varphi||\Lb \varphi||L \varphi||\Lb\Omega^2 \varphi|.
\end{split}
\end{equation*}

For $T_{211}$, we have
\begin{align*}
 T_{211} &\lesssim \int_0^{\ub}\|\Omega\varphi\|_{L^{\infty}}\|L\varphi\|_{L^{\infty}}\|\Lb \varphi\|_{L^2(\Cb_{\ub'})}\|\Lb\Omega\varphi\|_{L^2(\Cb_{\ub'})}d\ub'\\
&\lesssim \delta^{\frac{7}{4}} \cdot M^4.
\end{align*}

For $T_{212}$, we have
\begin{align*}
 T_{212} &\lesssim \int_0^{\ub}\|\Omega\varphi\|_{L^{\infty}}\|L\varphi\|_{L^{\infty}} \|\Lb\Omega\varphi\|_{L^2(\Cb_{\ub'})}\|\Lb\Omega^2\varphi\|_{L^2(\Cb_{\ub'})}d\ub'\\
&\lesssim \delta^{\frac{7}{4}} \cdot M^4.
\end{align*}

For $T_{213}$, we have
\begin{align*}
 T_{213} &\lesssim \int_0^{\ub}\|\Lb\varphi\|_{L^{\infty}}\|L\varphi\|_{L^{\infty}}\|\Omega^2\varphi\|_{L^2(\Cb_{\ub'})}\|\Lb\Omega^2\varphi\|_{L^2(\Cb_{\ub'})}d \ub'\\
&\lesssim \delta^{\frac{5}{4}} \cdot M^4.
\end{align*}

Therefore, we have
\begin{equation*}
 T_{21} \lesssim \delta^{\frac{5}{4}} \cdot M^4.
\end{equation*}

For $T_{22}$, according to the values of $(k,p)$, we also have three cases: $(k,p,i) = (1,0,1)$, $(1,1,2)$ or $(2,0,2)$ and we further spit $T_{22}$ according these three cases:
\begin{equation*}
 T_{22} = T_{221}+T_{222}+T_{223},
\end{equation*}
where
\begin{equation*}
\begin{split}
T_{221} &= \doubleint_{\mathcal{D}}|\Omega \varphi||\Lb \varphi||L \varphi||\Lb\Omega \varphi|, \\
T_{222} &= \doubleint_{\mathcal{D}}|\Omega \varphi||L \Omega \varphi||\Lb \varphi||\Lb\Omega^2 \varphi|, \\
T_{223} &= \doubleint_{\mathcal{D}}|\Omega^2 \varphi||\Lb \varphi||L \varphi||\Lb\Omega^2 \varphi|.
\end{split}
\end{equation*}
Since $T_{221}$ and $T_{223}$ have appeared as $T_{211}$ and $T_{213}$, we ignore them and we only bound $T_{222}$ as follows:
\begin{align*}
 T_{222} &\lesssim \int_{0}^{\ub}\|\Omega\varphi\|_{L^{\infty}}\|\Lb \varphi\|_{L^{\infty}}\|L \Omega\varphi\|_{L^2(\Cb_{\ub'})}\|\Lb \Omega^2\varphi\|_{L^2(\Cb_{\ub'})}d \ub'\\
&\lesssim \delta  \cdot M^3 \cdot \int_{0}^{\ub}\|L \Omega\varphi\|_{L^2(\Cb_{\ub'})}d \ub'\\
&\lesssim \delta^{\frac{3}{2}}  \cdot M^3 \cdot \left(\int_{0}^{\ub}\|L \Omega\varphi\|^2_{L^2(\Cb_{\ub'})}d \ub'\right)^{\frac{1}{2}}\\
&\lesssim \delta^{2}  \cdot M^4.
\end{align*}
Therefore,
\begin{equation*}
 T_{22} \lesssim  \delta^{\frac{5}{4}}  \cdot M^4.
\end{equation*}

For $T_{23}$, according to the values of $(k,p,i)$, i.e. $(k,p,i) = (1,0,1)$, $(1,1,2)$ or $(2,0,2)$, we further spit $T_{23}$ into three cases:
\begin{equation*}
 T_{23} = T_{231}+T_{232}+T_{233},
\end{equation*}
where
\begin{equation*}
\begin{split}
T_{231} &= \doubleint_{\mathcal{D}}|\Omega \varphi||\nablaslash \varphi|^2 |\Lb\Omega \varphi|, \\
T_{232} &= \doubleint_{\mathcal{D}}|\Omega \varphi||\nablaslash\Omega \varphi||\nablaslash \varphi||\Lb\Omega^2 \varphi|, \\
T_{233} &= \doubleint_{\mathcal{D}}|\Omega^2 \varphi||\nablaslash \varphi|^2|\Lb\Omega^2 \varphi|.
\end{split}
\end{equation*}

For $T_{231}$, we have
\begin{align*}
 T_{231} &\lesssim \int_{0}^{\ub}\|\Omega\varphi\|_{L^{\infty}}\|\nablaslash\varphi\|_{L^{\infty}}\|\nablaslash \varphi\|_{L^2(\Cb_{\ub'})}\|\Lb\Omega\varphi\|_{L^2(\Cb_{\ub'})} d \ub'\\
&\lesssim \delta^2  \cdot M^4.
\end{align*}

For $T_{232}$, we have
\begin{align*}
 T_{232} &\lesssim \int_{0}^{\ub}\|\Omega\varphi\|_{L^{\infty}}\|\nablaslash\varphi\|_{L^{\infty}}\|\nablaslash \Omega\varphi\|_{L^2(\Cb_{\ub'})}\|\Lb\Omega^2\varphi\|_{L^2(\Cb_{\ub'})}d\ub'\\
&\lesssim \delta^2  \cdot M^4.
\end{align*}

For $T_{233}$, we have
\begin{align*}
 T_{233} &\lesssim \int_{0}^{\ub} \|\nablaslash\varphi\|^2_{L^{\infty}}\|\Omega^2\varphi\|_{L^2(\Cb_{\ub'})}\|\Lb\Omega^2\varphi\|_{L^2(C_{u'})}d\ub'\\
&\lesssim \delta^2 \cdot M^4.
\end{align*}
Therefore, we have
\begin{equation*}
 T_{23} \lesssim \delta^2 \cdot M^4.
\end{equation*}
and
\begin{equation*}
 T_{2} \lesssim \delta^{\frac{5}{4}} \cdot M^4.
\end{equation*}

For $T_3$, the derivation of the estimates is different from all the previous terms. In fact, we have
\begin{align*}
T_3 &\lesssim \doubleint_{\mathcal{D}}\frac{\delta}{r^2}|L\Omega^i\varphi|^2+ \frac{1}{\delta}|\Lb\Omega^i\varphi|^2\\
&= \int_{u_0}^{u} \delta  \|L\Omega^i\varphi\|^2_{L^{2}(C_{u'})} du' + \frac{1}{\delta}\int_{0}^{\ub}\|\Lb\Omega^i\varphi\|^2_{L^2(\Cb_{\ub'})} d\ub'.
\end{align*}
At this point, we can use the conclusion of \eqref{ES-E1-E2-E3-b} which leads to
\begin{align*}
T_3 &\lesssim \int_{u_0}^{u} \delta  (I^2 + \delta^{\frac{1}{8}}M^{4}) du' + \frac{1}{\delta}\int_{0}^{\ub}\|\Lb\Omega^i\varphi\|^2_{L^2(\Cb_{\ub'})} d\ub'\\
&= \delta I^2 + \delta^{\frac{9}{8}} \cdot M^{4}   + \frac{1}{\delta}\int_{0}^{\ub}\|\Lb\Omega^i\varphi\|^2_{L^2(\Cb_{\ub'})} d\ub'.
\end{align*}

For $T_4$, again thanks to \eqref{ES-E1-E2-E3-b}, we have
\begin{align*}
T_4 &\lesssim \int_{0}^{\ub} \int_{\Cb_{\ub'}}|\nablaslash\Omega^i\varphi|^2 d\ub'\\
&\lesssim \delta I^2 + \delta^{\frac{9}{8}} \cdot M^4.
\end{align*}

Now we add all \eqref{ES-Eb1-Eb2-Eb3-a} for $i=0,1,2$, in view of the above estimates, if $\delta$ is sufficiently small, we now have
\begin{equation*}
\sum_{i=0}^2 \left(\int_{C_{u}}|\nablaslash\Omega^i\varphi|^{2}+\int_{\underline{C}_{\underline{u}}}|\Lb\Omega^i\varphi|^{2}\right)\lesssim \delta I^2 + \delta^{\frac{9}{8}} \cdot M^4  + \frac{1}{\delta}\sum_{i=0}^2\int_{0}^{\ub}\|\Lb\Omega^i\varphi\|^2_{L^2(\Cb_{\ub'})} d\ub'.
 \end{equation*}
Since the $\|\Lb\Omega^i\varphi\|^2_{L^2(\Cb_{\ub})}$'s also appear on the left hand side, a standard use of Gronwall's inequality removes the integral on the right hand side. This yields
\begin{equation}\label{ES-Eb1-Eb2-Eb3-b}
\sum_{i=0}^2 \left(\|\nablaslash\Omega^i\varphi\|_{L^2(C_u)} + \|\Lb\Omega^i \varphi\|_{L^2(\Cb_{\ub})} \right)\lesssim \delta^{\frac{1}{2}} I + \delta^{\frac{9}{16}} \cdot M^{2}.
\end{equation}

Putting \eqref{ES-E1-E2-E3-b} and \eqref{ES-Eb1-Eb2-Eb3-b} together, we have obtained the energy estimates for $E_{k}$'s and $\underline{E}_{k}$'s:
\begin{equation}\label{ES E-Eb up to three derivative}
\sum_{i=1}^3 \left (E_i(u,\ub) + \Eb_i(u, \ub)\right) \lesssim I +  \delta^{\frac{1}{16}} \cdot M^{2}.
\end{equation}

\subsection{Estimates on $F_2(u,\ub)$ and $\Fb_2(u, \ub)$}
We first consider the bound of $\|\Lb^2 \varphi\|_{L^2(\Cb_{\ub})}$. We commute $\Lb$ with \eqref{Main Equation}, in view of \eqref{commute Lb Omega n with main equation}, we obtain
\begin{equation*}
\begin{split}
 \Box \Lb \varphi &= \Lb \varphi \cdot Q_{0}(\nabla\varphi, \nabla\varphi)+ \varphi \cdot Q_{0}(\nabla \Lb\varphi, \nabla\varphi) +\varphi \cdot \frac{2}{r} \left( Q_{0}(\nabla \varphi, \nabla\varphi)+L\varphi \Lb\varphi \right)\\
& \qquad\qquad\qquad + \frac{1}{2r^2}(\Lb  \varphi -L\varphi)-\frac{2}{r}\laplacianslash \varphi.
\end{split}
\end{equation*}

We use \eqref{fundamental energy identity} for the above equation where we take $\phi = \Lb\varphi$ and $X =\Lb$, therefore,\footnote{\,\, Once again, we ignore the signs and numerical constants.}
\begin{equation}\label{ES-2-3-a}
\begin{split}
&\quad\int_{C_{u}}|\nablaslash \Lb\varphi|^{2}+\int_{\underline{C}_{\underline{u}}}|\Lb^2\varphi|^{2} =\int_{C_{u_{0}}}|\nablaslash \Lb\varphi|^{2}  +\doubleint_{\mathcal{D}}\varphi Q_{0}(\nabla \Lb \varphi, \nabla \varphi)
\Lb^2\varphi \\
&\quad +\doubleint_{\mathcal{D}}\varphi \cdot\frac{2}{r}\left(Q_{0}(\nabla \varphi, \nabla \varphi)+L\varphi\Lb \varphi
\right)\Lb^2\varphi + \doubleint_{\mathcal{D}}\Lb\varphi Q_{0}(\nabla \varphi, \nabla \varphi)\Lb^2\varphi\\
&\quad +\doubleint_{\mathcal{D}}\frac{1}{r^2}(\Lb \varphi -L \varphi)\Lb^2 \varphi + \doubleint_{\mathcal{D}}\frac{1}{r}\laplacianslash \varphi \Lb^2 \varphi+ \doubleint_{\mathcal{D}} \frac{1}{2r}L\Lb\varphi \Lb^2 \varphi + \doubleint_{\mathcal{D}} \frac{1}{2r}|\nablaslash\Lb\varphi|^2\\
&= \int_{C_{u_{0}}}|\nablaslash \Lb\varphi|^{2} + S_1+S_2+\cdots+S_7,
\end{split}
\end{equation}
where the error terms $S_j$'s are defined in the obvious way. We now bound those error terms one by one.

For $S_1$, we have
\begin{equation*}
\begin{split}
 S_1 &\lesssim \doubleint_{\mathcal{D}}|L \varphi||\Lb^2 \varphi|^2 + \doubleint_{\mathcal{D}}|\Lb \varphi||L\Lb \varphi||\Lb^2 \varphi| +\doubleint_{\mathcal{D}}|\nablaslash \varphi||\nablaslash\Lb \varphi||\Lb^2 \varphi|\\
&=S_{11}+S_{12}+S_{13}.
\end{split}
\end{equation*}

For $S_{11}$, we have
\begin{align*}
 S_{11}  &\lesssim \|L\varphi\|_{L^\infty} \int_{0}^{\ub} (\int_{\Cb_{\ub'}}|\Lb^2 \varphi|^2)d\ub'\\
&\lesssim \delta^{\frac{1}{2}} \cdot M^3.
\end{align*}

For $S_{12}$, we need control for $L \Lb \varphi$. According to \eqref{Main Equation in null frame}, we have
\begin{align*}
 \|L\Lb\varphi\|_{L^2(C_u)} &\lesssim \|\laplacianslash \varphi\|_{L^2(C_u)}+\|\frac{1}{r} L\varphi\|_{L^2(C_u)}+\|\frac{1}{r}\Lb\varphi\|_{L^2(C_u)}\\
&\quad 	+\|\Lb\varphi  L\varphi\|_{L^2(C_u)}+\||\nablaslash \varphi|^2\|_{L^2(C_u)}.
\end{align*}
For quadratic terms, we bound one term in $L^\infty$, thus, we have
\begin{equation}\label{estimates on L Lb phi}
 \|L\Lb\varphi\|_{L^2(C_u)} \lesssim M.
\end{equation}
Hence,
\begin{align*}
 S_{12}  &\lesssim \|\Lb\varphi\|_{L^{\infty}} (\int_{u_0}^{u}\|L\Lb\varphi\|^2_{L^2(C_{u'})}du')^{\frac{1}{2}}(\int_{0}^{\ub}\|\Lb^2\varphi\|^2_{L^2(\Cb_{\ub'})} d \ub')^{\frac{1}{2}}\\
&\lesssim \delta^{\frac{3}{4}} \cdot M^3.
\end{align*}

For $S_{13}$, we have
\begin{align*}
  S_{13} & \lesssim \|\nablaslash \varphi\|_{L^{\infty}} (\int_{u_0}^{u}\|\nablaslash \Lb\varphi\|^2_{L^2(C_{u'})}du')^{\frac{1}{2}}(\int_{0}^{\ub}\|\Lb^2\varphi\|^2_{L^2(\Cb_{\ub'})} d \ub')^{\frac{1}{2}}\\
&\lesssim \delta^{\frac{7}{4}} \cdot M^3.
\end{align*}

Thus, we have
\begin{align*}
  S_{1} &\lesssim \delta^{\frac{1}{2}} \cdot M^3.
\end{align*}

For $S_2$, we have
\begin{align*}
 S_2 &\lesssim \doubleint_{\mathcal{D}} (|\nablaslash \varphi|^2+|L \varphi||\Lb\varphi|) |\Lb^2 \varphi|\\
&\lesssim  (\|\nablaslash \varphi\|_{L^{\infty}}+\|\Lb \varphi\|_{L^{\infty}}) (\|\nablaslash\varphi\|_{L^2(\mathcal{D})}+ \|L\varphi\|_{L^2(\mathcal{D})})\|\Lb^2\varphi\|_{L^2(\mathcal{D})}\\
& \lesssim \delta \cdot M^3.
\end{align*}

For $S_3$, we have	
\begin{align*}
 S_3 &\lesssim\|\Lb \varphi\|_{L^\infty} \doubleint_{\mathcal{D}}(|\nablaslash \varphi|^2+|L \varphi||\Lb\varphi|) |\Lb^2 \varphi|\\
&\lesssim \delta^{\frac{1}{4}} \cdot M (\|\nablaslash \varphi\|_{L^{\infty}}+\|\Lb \varphi\|_{L^{\infty}}) (\|\nablaslash\varphi\|_{L^2(\mathcal{D})}+ \|L\varphi\|_{L^2(\mathcal{D})})\|\Lb^2\varphi\|_{L^2(\mathcal{D})}\\
& \lesssim \delta^{\frac{5}{4}}  \cdot M^4.
\end{align*}

For $S_4$, $S_5$, $S_6$ and $S_7$, we use H\"{o}lder's inequality to bound each factor in the quadratic expressions in $L^2$. As an example, we bound $S_4$ as follows:
\begin{align*}
 S_4 &\lesssim (\|L\varphi\|_{L^2(\mathcal{D})}+ \|\Lb\varphi\|_{L^2(\mathcal{D})})\|\Lb^2\varphi\|_{L^2(\mathcal{D})}\\
& \lesssim \delta^{\frac{1}{2}} \cdot M^2.
\end{align*}
Similarly, we have
\begin{equation*}
 S_4 +S_5 +S_6 +S_7 \lesssim \delta^{\frac{1}{2}} \cdot M^2.
\end{equation*}

Putting all those estimates back to \eqref{ES-2-3-a},  we obtain
\begin{equation*}
\int_{\underline{C}_{\underline{u}}}|\Lb^2\varphi|^{2} \lesssim \delta^{2} I^2 + \delta^{\frac{1}{2}} M^3+\delta^{\frac{3}{4}} M^4.
\end{equation*}
where $I$ is the size of the initial data. This leads to our desired estimates:
\begin{equation}\label{ES-2-3-b}
\Fb_2(\ub)\lesssim \delta I + 	\delta^{\frac{1}{4}} M^{\frac{3}{2}}.
\end{equation}
\\

We now consider the bound of $\|L^2 \varphi\|_{L^2(C_u)}$. In view of \eqref{commute L Omega n with main equation}, we commute $L$ with \eqref{Main Equation} to derive
\begin{equation*}
\begin{split}
 \Box L \varphi &= L\varphi \cdot
Q_{0}(\nabla \varphi, \nabla \varphi)+\varphi \cdot Q_{0}(\nabla L\varphi, \nabla\varphi)  + \varphi \cdot \frac{2}{r}(Q_{0}(\nabla \varphi, \nabla \varphi)+L\varphi \Lb\varphi)\\
&\qquad \qquad \qquad - \frac{1}{2r^2}(\Lb \varphi -L  \varphi)+\frac{2}{r}\laplacianslash  \varphi.
\end{split}
\end{equation*}

We use \eqref{fundamental energy identity} for the above equation where we take $\phi = L\varphi$ and $X = L$, therefore,\footnote{\,\, Once again, we ignore the signs and numerical constants.}
\begin{equation}\label{ES-2-4-a}
\begin{split}
&\quad\int_{C_{u}}|L^2 	 \varphi|^{2}+\int_{\underline{C}_{\underline{u}}}|\nablaslash L \varphi|^{2} =\int_{C_{u_{0}}}|L^2\varphi|^{2}  +\doubleint_{\mathcal{D}}\varphi Q_{0}(\nabla L \varphi, \nabla \varphi)
L^2\varphi \\
&\quad +\doubleint_{\mathcal{D}}\varphi \cdot\frac{2}{r}\left(Q_{0}(\nabla \varphi, \nabla \varphi)+L\varphi\Lb \varphi
\right)L^2\varphi + \doubleint_{\mathcal{D}}L\varphi Q_{0}(\nabla \varphi, \nabla \varphi)L^2\varphi\\
&\quad +\doubleint_{\mathcal{D}}\frac{1}{r^2}(\Lb \varphi -L \varphi)L^2 \varphi + \doubleint_{\mathcal{D}}\frac{1}{r}\laplacianslash \varphi L^2 \varphi+ \doubleint_{\mathcal{D}} \frac{1}{2r}L\Lb\varphi L^2 \varphi + \doubleint_{\mathcal{D}} \frac{1}{2r}|\nablaslash L\varphi|^2\\
&= \int_{C_{u_{0}}}|L^2\varphi|^{2}   + T_1+T_2+\cdots+T_7,
\end{split}
\end{equation}
where the error terms $T_j$'s are defined in the obvious way. We now bound those error terms one by one.

For $T_1$, we have
\begin{equation*}
\begin{split}
 T_1 &\lesssim \doubleint_{\mathcal{D}}|\Lb \varphi||L^2 \varphi|^2 + \doubleint_{\mathcal{D}}|L \varphi||\Lb L \varphi||L^2 \varphi| +\doubleint_{\mathcal{D}}|\nablaslash \varphi||\nablaslash L \varphi||L^2 \varphi|\\
&=T_{11}+T_{12}+T_{13}.
\end{split}
\end{equation*}

For $T_{11}$, we have
\begin{align*}
 T_{11}  &\lesssim \|\Lb\varphi\|_{L^\infty} \int_{u_0}^{u} (\int_{C_{u'}}|L^2 \varphi|^2) d u'\\
&\lesssim \delta^{-\frac{7}{4}} \cdot M^3.
\end{align*}

For $T_{12}$, we have
\begin{align*}
 T_{12}  &\lesssim \|L \varphi\|_{L^{\infty}} (\int_{u_0}^{u}\|L\Lb\varphi\|^2_{L^2(C_{u'})}du')^{\frac{1}{2}}(\int_{u_0}^{u}\|L^2 \varphi\|^2_{L^2(C_{u'})}du')^{\frac{1}{2}}\\
&\lesssim \delta^{-\frac{3}{2}} \cdot M^3.
\end{align*}

For $T_{13}$, we have
\begin{align*}
  T_{13} & \lesssim \|\nablaslash \varphi\|_{L^{\infty}} (\int_{u_0}^{u}\|\nablaslash \Lb\varphi\|^2_{L^2(C_{u'})}du')^{\frac{1}{2}}(\int_{u_0}^{u}\|L^2 \varphi\|^2_{L^2(C_{u'})}du')^{\frac{1}{2}}\\
&\lesssim \delta^{-\frac{3}{2}} \cdot M^3.
\end{align*}

Thus, we have
\begin{align*}
  T_{1} &\lesssim \delta^{-\frac{7}{4}} \cdot M^3.
\end{align*}

For $T_2$, we have
\begin{align*}
 T_2 &\lesssim \doubleint_{\mathcal{D}} (|\nablaslash \varphi|^2+|L \varphi||\Lb\varphi|) |L^2 \varphi|\\
&\lesssim  (\|\nablaslash \varphi\|_{L^{\infty}}+\|\Lb \varphi\|_{L^{\infty}}) (\|\nablaslash\varphi\|_{L^2(\mathcal{D})}+ \|L\varphi\|_{L^2(\mathcal{D})})\|\Lb^2\varphi\|_{L^2(\mathcal{D})}\\
&\lesssim \delta^{-\frac{5}{4}} \cdot M^3.
\end{align*}

For $T_3$, similarly, we have	
\begin{align*}
 T_3 &\lesssim  \delta^{-\frac{3}{4}} \cdot M^4.
\end{align*}

For $T_4$, $T_5$, $T_6$ and $T_7$, we bound each factor in the quadratic expressions in $L^2$. To illustrate, we bound $S_4$ as follows:
\begin{align*}
 T_4 &\lesssim  (\|L\varphi\|_{L^2(\mathcal{D})}+ \|\Lb\varphi\|_{L^2(\mathcal{D})})\|L^2\varphi\|_{L^2(\mathcal{D})}\\
& \lesssim \delta^{-1} \cdot M^2.
\end{align*}
Similarly, we have
\begin{equation*}
 T_4 +T_5 + T_6 +T_7 \lesssim \delta^{-1} \cdot M^2.
\end{equation*}

Back to \eqref{ES-2-4-a}, we have
\begin{equation*}
\int_{C_{u}}|L^2 \varphi|^{2} \lesssim \delta^{-2}I^2  + \delta^{-\frac{7}{4}} M^3,
\end{equation*}
thus,
\begin{equation}\label{ES-2-4-b}
F_2(u,\ub)= \delta\| L^{2}\varphi\|_{L^{2}(C_{u})} \lesssim I  + \delta^{\frac{1}{8}} M^\frac{3}{2}.
\end{equation}

We summarize the estimates in the subsection, i.e. \eqref{ES-2-4-b} and \eqref{ES-2-3-b} as follows,
\begin{equation}\label{ES two derivative F2 Fb2}
\begin{split}
F_2(u,\ub) &\lesssim I  + \delta^{\frac{1}{16}}\cdot M^\frac{3}{2}, \\
\Fb_2(u, \ub) &\lesssim \delta I + \delta^{\frac{1}{8}}\cdot M^\frac{3}{2}.
\end{split}
\end{equation}
where $\delta$ is sufficiently small.

\subsection{Estimates on $F_{3}(u,\ub)$}

In view of \eqref{commute L Omega n with main equation}, we commute $L$ and $\Omega$ with \eqref{Main Equation} to derive
\begin{equation*}
\begin{split}
 \Box L \Omega \varphi = & L\varphi \cdot Q_0(\nabla \Omega \varphi, \nabla \varphi)+L\Omega \varphi \cdot Q_0(\nabla \varphi, \nabla \varphi)\\
&+\varphi\cdot Q_0(\nabla L\Omega \varphi, \nabla \varphi) + \varphi \cdot Q_0(\nabla \Omega \varphi, \nabla L \varphi) + \Omega \varphi \cdot Q_0(\nabla L\varphi, \nabla \varphi)\\
&+ \varphi\cdot \frac{1}{r}\cdot \left( 2Q_0(\nabla \Omega \varphi, \nabla \varphi) + L\Omega \varphi \cdot \Lb\varphi + L\varphi \cdot \Lb \Omega \varphi\right)\\
&+ \Omega \varphi\cdot \frac{2}{r}\cdot \left( Q_0(\nabla \varphi, \nabla \varphi) + L\varphi \cdot \Lb \varphi\right)- \frac{1}{2r^2}(\Lb \Omega \varphi -L \Omega \varphi)+\frac{2}{r}\laplacianslash  \varphi.
\end{split}
\end{equation*}
Applying \eqref{fundamental energy identity} to the above equation with $\phi = L\Omega\varphi,$ and $X = L$, we obtain
\begin{equation}\label{ES-F3-F4}
\begin{split}
\int_{C_{u}}&|L^2 \Omega \varphi|^{2}+\int_{\underline{C}_{\underline{u}}}|\nablaslash L \Omega\varphi|^{2} =\int_{C_{u_{0}}}|L^2 \Omega \varphi|^{2} + \doubleint_{\mathcal{D}}L\varphi \cdot Q_0(\nabla \Omega \varphi, \nabla \varphi) \cdot L^2\Omega\varphi\\
&+ \doubleint_{\mathcal{D}}L\Omega \varphi \cdot Q_0(\nabla \varphi, \nabla \varphi) \cdot L^2\Omega\varphi + \doubleint_{\mathcal{D}}\varphi\cdot Q_0(\nabla L\Omega \varphi, \nabla \varphi)\cdot L^2\Omega\varphi\\
&+ \doubleint_{\mathcal{D}}\varphi \cdot Q_0(\nabla \Omega \varphi, \nabla L \varphi)  \cdot L^2\Omega\varphi + \doubleint_{\mathcal{D}}\Omega \varphi \cdot Q_0(\nabla L\varphi, \nabla \varphi)\cdot L^2\Omega\varphi\\
&+ \doubleint_{\mathcal{D}}\varphi\cdot \frac{1}{r}\cdot \left( 2Q_0(\nabla \Omega \varphi, \nabla \varphi) + L\Omega \varphi \cdot \Lb\varphi + L\varphi \cdot \Lb \Omega \varphi\right) \cdot L^2\Omega\varphi\\
&+ \doubleint_{\mathcal{D}}\Omega \varphi\cdot \frac{2}{r}\cdot \left( Q_0(\nabla \varphi, \nabla \varphi) + L\varphi \cdot \Lb \varphi\right)\cdot L^2\Omega\varphi\\
&+\doubleint_{\mathcal{D}}  \frac{1}{r^2}(\Lb \Omega \varphi -L \Omega \varphi)\cdot L^2\Omega\varphi+\doubleint_{\mathcal{D}}\frac{2}{r}\laplacianslash  \varphi\cdot L^2\Omega\varphi\\
&+\doubleint_{\mathcal{D}}\frac{1}{r}\Lb L \Omega \varphi \cdot L^2\Omega\varphi+\doubleint_{\mathcal{D}}\frac{1}{r} |\nablaslash L\Omega\varphi|^2.
\end{split}
\end{equation}
We rewrite the right hand side of the above equation as
\begin{equation*}
 \int_{C_{u_{0}}}|L^2 \Omega \varphi|^{2} + S_1 +S_2 + \cdots+ S_{11},
\end{equation*}
where the error terms $S_i$'s are defined in the obvious way. Once again, we need to control the error terms one by one.

For $S_1$, we have
\begin{align*}
 S_1 &\lesssim \doubleint_{\mathcal{D}}|L\varphi|^2 |\Lb \Omega \varphi| |L^2\Omega\varphi|+\doubleint_{\mathcal{D}}|L\varphi| |\Lb \varphi||L \Omega \varphi| |L^2\Omega\varphi| + \doubleint_{\mathcal{D}}|L\varphi| |\nablaslash \varphi| |\nablaslash \Omega \varphi| |L^2\Omega\varphi|\\
&=S_{11} +S_{12} +S_{13}.
\end{align*}

For $S_{11}$, we have
\begin{align*}
 S_{11}  &\lesssim \|L\varphi\|^2_{L^\infty} \int_{u_0}^{u} \|\Lb\Omega \varphi\|_{L^2(C_{u'})} \|L^2\Omega\varphi\|_{L^2(C_{u'})} d u'\\
&\lesssim \delta^{-1}\cdot M^4.
\end{align*}

For $S_{12}$ and $S_{13}$, similarly, we have
\begin{align*}
 S_{12}+S_{13} \lesssim \delta^{-1} \cdot M^4.
\end{align*}

Thus, we have
\begin{align*}
 S_{1} \lesssim \delta^{-1}  \cdot M^4.
\end{align*}

For $S_2$, we have
\begin{align*}
 S_2 &\lesssim \doubleint_{\mathcal{D}}|L\varphi||\Lb \varphi| |L \Omega \varphi| |L^2\Omega\varphi|+\doubleint_{\mathcal{D}}|\nablaslash\varphi|^2 |L \Omega \varphi| |L^2\Omega\varphi|.
\end{align*}
Each term in the above expression can be estimated exactly in a similar manner as we have done for $S_{11}$, therefore,
\begin{align*}
 S_{2} \lesssim \delta^{-1} \cdot M^4.
\end{align*}

For $S_3$, we have
\begin{align*}
 S_3 &\lesssim  \doubleint_{\mathcal{D}}|\Lb\varphi| |L^2\Omega\varphi|^2 + \doubleint_{\mathcal{D}}|L\varphi||\Lb L\Omega\varphi| |L^2\Omega\varphi| + \doubleint_{\mathcal{D}}|\nablaslash\varphi||\nablaslash L\Omega\varphi| |L^2\Omega\varphi|\\
&\lesssim S_{31} +S_{32}+S_{33}.
\end{align*}

The estimates for $S_{31}$ and $S_{33}$ are straightforward, we simply use $L^\infty$ bound on the first order term. This yields
\begin{align*}
 S_{31} +S_{33} \lesssim \delta^{-\frac{3}{2}} \cdot M^3.
\end{align*}

For $S_{32}$, we need the bound on $\|\Lb L\Omega\varphi\|_{L^2(C_u)}$. Since the derivation is exactly similar to that of \eqref{estimates on L Lb phi}, we only state the conclusion:
\begin{equation}\label{estimates on L Lb Omega phi}
 \|\Lb L\Omega\varphi\|_{L^2(C_u)} \lesssim  M.
\end{equation}
Then, we can bound $S_{32}$ in a similar manner as $S_{31}$ or $S_{32}$. This yields
\begin{align*}
 S_{32} \lesssim \delta^{-\frac{3}{2}}  \cdot M^3,
\end{align*}
and
\begin{align*}
 S_{3} \lesssim \delta^{-\frac{3}{2}} \cdot M^3,
\end{align*}

For $S_{4}$, we have
\begin{align*}
 S_{4} &\lesssim  \doubleint_{\mathcal{D}}|\Lb\Omega\varphi||L^2 \varphi||L^2\Omega\varphi| + \doubleint_{\mathcal{D}}|L\Omega\varphi| |\Lb L \varphi||L^2\Omega\varphi| + \doubleint_{\mathcal{D}}|\nablaslash \Omega \varphi||\nablaslash L \varphi| |L^2\Omega\varphi|\\
&=S_{41} +S_{42} +S_{43}.
\end{align*}

For $S_{41}$, we have
\begin{align*}
 S_{41}  &\leq \int_0^{\ub} \int_{u_0}^{u}\|\Lb\Omega\varphi\|_{L^4(S_{\ub',u'})}\|L^2\varphi\|_{L^4(S_{\ub',u'})} \|L^2\Omega\varphi\|_{L^2(S_{\ub', u'})}d\ub' du'.
\end{align*}
We now use Sobolev inequality to control $L^2\varphi$:
\begin{align*}
\|L^2\varphi \|_{L^{4}(S_{\ub',u'})} &\lesssim|u'|^{\frac{1}{4}}\|L^2\varphi\|_{L^\infty}\\
&\lesssim  |u'|^{\frac{3}{4}} \|\nablaslash L^2\varphi\|_{L^{2}(S_{\ub', u'})} +|u'|^{-\frac{1}{4}}\| L^2\varphi \|_{L^{2}(S_{\ub', u'})}.
\end{align*}
Thus, we have
\begin{align*}
 S_{41}  &\lesssim \delta^{\frac{1}{4}} M \int_{u_0}^{u} \|L^2\Omega \varphi\|^2_{L^{2}(C_{u'})}  +\|L^2\varphi\|_{L^2(C_{u'})}\|L^2\Omega\varphi\|_{L^2(C_{u'})}  du'\\
&\lesssim \delta^{-\frac{7}{4}} \cdot M^3.
\end{align*}

The estimates for $S_{42}$ and $S_{43}$ are similar to $S_{41}$, we only give the conclusion:
\begin{equation*}
  S_{42}+S_{43} \lesssim \delta^{-\frac{3}{2}} \cdot M^3.
\end{equation*}

Therefore,
\begin{equation*}
  S_{4} \lesssim \delta^{-\frac{7}{4}} \cdot M^3.
\end{equation*}

For $S_{5}$, we have
\begin{align*}
 S_5 &\lesssim  \doubleint_{\mathcal{D}}|\Omega \varphi||\Lb L\varphi||L \varphi||L^2\Omega\varphi| + \doubleint_{\mathcal{D}}|\Omega \varphi||L^2\varphi||\Lb \varphi||L^2\Omega\varphi|\\
&\quad + \doubleint_{\mathcal{D}}|\Omega \varphi||\nablaslash L\varphi||\nablaslash \varphi||L^2\Omega\varphi|.
\end{align*}
Therefore, the estimates for $S_{5}$ is similar to $S_1$. The proof is routine and we only give the final estimates
\begin{align*}
 S_{5} \lesssim \delta^{-1} \cdot M^4.
\end{align*}

Similarly, the estimates for $S_{6}$ and $S_{7}$ are similar to those of $S_1$ or $S_2$, we only give the conclusions
\begin{align*}
 S_{6} +S_{7} \lesssim \delta^{-1} \cdot M^4.
\end{align*}

For $S_8$, $S_9$, $S_{10}$ and $S_{11}$, we simply bound each term in the quadratic expressions in $L^2$. We do this for $S_{10}$ to illustrate the idea:
\begin{align*}
 S_{10} &\lesssim |u|^{-1} \|\Lb L \Omega \varphi\|_{L^2(\mathcal{D})}\|L^2 \Omega \varphi\|_{L^2(\mathcal{D})}\\
&\lesssim \delta^{-1}  \cdot M^2.
\end{align*}
Similarly,
\begin{align*}
 S_{8} +S_{9} +S_{10} +S_{11}&\lesssim \delta^{-1} \cdot M^2.
\end{align*}

Putting all estimates for error terms back to \eqref{ES-F3-F4}, if $\delta$ is sufficiently small, we obtain
\begin{equation*}
\int_{C_{u}}|L^2 \Omega \varphi|^{2} \lesssim \delta^{-2}I^2  + \delta^{-\frac{7}{4}} M^3.
\end{equation*}
Hence, we obtain the estimates for $F_3(u,\ub)$:
\begin{equation}\label{ES-ES-F3-F4-b}
F_{3}(u,\ub) \lesssim I  + \delta^{\frac{1}{16}} M^{\frac{3}{2}}.
\end{equation}

\subsection{Estimates on $\Fb_3(u, \ub)$}\label{Section Estimates on Fb_3 and Fb_4}

In view of \eqref{commute L Omega n with main equation}, we ommute $\Lb$ and $\Omega$ with \eqref{Main Equation} to derive
\begin{equation*}
\begin{split}
 \Box \Lb \Omega \varphi = & \Lb\varphi \cdot Q_0(\nabla \Omega \varphi, \nabla \varphi)+ \Lb\Omega \varphi \cdot Q_0(\nabla \varphi, \nabla \varphi)\\
&+\varphi\cdot Q_0(\nabla \Lb\Omega \varphi, \nabla \varphi) + \varphi \cdot Q_0(\nabla \Omega \varphi, \nabla \Lb \varphi) + \Omega \varphi \cdot Q_0(\nabla \Lb\varphi, \nabla \varphi)\\
&+ \varphi\cdot \frac{1}{r}\cdot \left( 2Q_0(\nabla \Omega \varphi, \nabla \varphi) + L\Omega \varphi \cdot \Lb\varphi + L\varphi \cdot \Lb \Omega \varphi\right)\\
&+ \Omega \varphi\cdot \frac{2}{r}\cdot \left( Q_0(\nabla \varphi, \nabla \varphi) + L\varphi \cdot \Lb \varphi\right)+ \frac{1}{2r^2}(\Lb \Omega \varphi -L \Omega \varphi)-\frac{2}{r}\laplacianslash  \varphi.
\end{split}
\end{equation*}
Applying \eqref{fundamental energy identity} to the above equation with $\phi = \Lb\Omega\varphi,$ and $X = \Lb$, we obtain
\begin{equation}\label{ES-Fb3-Fb4-a}
\begin{split}
\int_{C_{u}}&|\nablaslash \Lb \Omega 	 \varphi|^{2}+\int_{\underline{C}_{\underline{u}}}|\Lb^2 \Omega\varphi|^{2} =\int_{C_{u_{0}}}|\nablaslash \Lb \Omega \varphi|^{2} + \doubleint_{\mathcal{D}}\Lb\varphi \cdot Q_0(\nabla \Omega \varphi, \nabla \varphi) \cdot \Lb^2\Omega\varphi\\
&+ \doubleint_{\mathcal{D}} \Lb\Omega \varphi \cdot Q_0(\nabla \varphi, \nabla \varphi) \cdot \Lb^2\Omega\varphi + \doubleint_{\mathcal{D}}\varphi\cdot Q_0(\nabla \Lb\Omega \varphi, \nabla \varphi)\cdot \Lb^2\Omega\varphi\\
&+ \doubleint_{\mathcal{D}}\varphi \cdot Q_0(\nabla \Omega \varphi, \nabla \Lb \varphi)  \cdot \Lb^2\Omega\varphi + \doubleint_{\mathcal{D}}\Omega \varphi \cdot Q_0(\nabla \Lb\varphi, \nabla \varphi)\cdot \Lb^2\Omega\varphi\\
&+ \doubleint_{\mathcal{D}}\varphi\cdot \frac{1}{r}\cdot \left( 2Q_0(\nabla \Omega \varphi, \nabla \varphi) + L\Omega \varphi \cdot \Lb\varphi + L\varphi \cdot \Lb \Omega \varphi\right) \cdot L^2\Omega\varphi\\
&+ \doubleint_{\mathcal{D}}\Omega \varphi\cdot \frac{2}{r}\cdot \left( Q_0(\nabla \varphi, \nabla \varphi) + L\varphi \cdot \Lb \varphi\right)\cdot \Lb^2\Omega\varphi\\
&+\doubleint_{\mathcal{D}}  \frac{1}{r^2}(\Lb \Omega \varphi -L \Omega \varphi)\cdot \Lb^2\Omega\varphi+\doubleint_{\mathcal{D}}\frac{2}{r}\laplacianslash  \varphi\cdot \Lb^2\Omega\varphi\\
&+\doubleint_{\mathcal{D}}\frac{1}{r}L \Lb \Omega \varphi \cdot \Lb^2\Omega\varphi+\doubleint_{\mathcal{D}}\frac{1}{r} |\nablaslash \Lb\Omega\varphi|^2.
\end{split}
\end{equation}
We rewrite the right hand side of the above equation as
\begin{equation*}
\int_{C_{u_{0}}}|\nablaslash \Lb \Omega \varphi|^{2} + S_1 +S_2 + \cdots+ S_{11},
\end{equation*}
where the error terms $S_i$'s are defined in the obvious way. We shall control the error terms one by one.

For $S_1$, we have
\begin{align*}
 S_1 &\lesssim \doubleint_{\mathcal{D}}|\Lb\varphi|^2 |L \Omega \varphi| |\Lb^2\Omega\varphi|+\doubleint_{\mathcal{D}}|L\varphi| |\Lb \varphi|| \Lb \Omega \varphi| |\Lb^2\Omega\varphi| + \doubleint_{\mathcal{D}}|\Lb\varphi| |\nablaslash \varphi| |\nablaslash \Omega \varphi| |\Lb^2\Omega\varphi|\\
&=S_{11} +S_{12} +S_{13}.
\end{align*}

The estimates on $S_{11}$, $S_{12}$ and $S_{13}$ are similar. We only work out the details for $S_{12}$:
\begin{align*}
 S_{12}  &\lesssim \|\Lb\varphi\|_{L^\infty}\|L\varphi\|_{L^\infty} \int_{0}^{\ub} \|\Lb\Omega \varphi\|_{L^2(\Cb_{\ub'})} \|\Lb^2\Omega\varphi\|_{L^2(\Cb_{\ub'})} d \ub'\\
&\lesssim \delta^{\frac{3}{4}} \cdot M^4.
\end{align*}
This eventually leads to
\begin{align*}
 S_{1} \lesssim \delta^{\frac{3}{4}} \cdot M^4.
\end{align*}

For $S_2$, we have
\begin{align*}
 S_2 &\lesssim \doubleint_{\mathcal{D}}|L\varphi||\Lb \varphi| |\Lb \Omega \varphi| |\Lb^2\Omega\varphi|+\doubleint_{\mathcal{D}}|\nablaslash\varphi|^2 |\Lb \Omega \varphi| |\Lb^2\Omega\varphi|.
\end{align*}
It can be estimated in the same way as $S_{11}$, therefore,
\begin{align*}
 S_{2} \lesssim \delta^{\frac{5}{4}} \cdot M^4.
\end{align*}

For $S_3$, we have
\begin{align*}
 S_3 &\lesssim  \doubleint_{\mathcal{D}}|L \varphi| |\Lb^2\Omega\varphi|^2 + \doubleint_{\mathcal{D}}|\Lb\varphi||L\Lb\Omega\varphi| |\Lb^2\Omega\varphi| + \doubleint_{\mathcal{D}}|\nablaslash\varphi||\nablaslash \Lb\Omega\varphi| |\Lb^2\Omega\varphi|.
\end{align*}
It can be estimated in the same way as $S_{11}$, therefore,
\begin{align*}
 S_{3} \lesssim \delta^{\frac{1}{2}} \cdot M^3.
\end{align*}

For $S_{4}$, we have
\begin{align*}
 S_{4} &\lesssim  \doubleint_{\mathcal{D}}|L \Omega\varphi||\Lb^2 \varphi||\Lb^2\Omega\varphi| + \doubleint_{\mathcal{D}}|\Lb\Omega\varphi| |L \Lb \varphi||\Lb^2\Omega\varphi| + \doubleint_{\mathcal{D}}|\nablaslash \Omega \varphi||\nablaslash \Lb \varphi| |\Lb^2\Omega\varphi|\\
&=S_{41} +S_{42} +S_{43}.
\end{align*}

For $S_{41}$, we have
\begin{align*}
 S_{41}  &\leq \int_0^{\ub} \int_{u_0}^{u}\|L \Omega\varphi\|_{L^4(S_{\ub',u'})}\|\Lb^2\varphi\|_{L^4(S_{\ub',u'})} \|\Lb^2\Omega\varphi\|_{L^2(S_{\ub', u'})}d\ub' du'.
\end{align*}
According to Sobolev inequality, we have
\begin{align*}
\|\Lb^2\varphi \|_{L^{4}(S_{\ub',u'})} &\lesssim|u'|^{\frac{1}{4}}\|\Lb^2\varphi\|_{L^\infty}\\
&\lesssim  |u'|^{\frac{3}{4}} \|\nablaslash \Lb^2\varphi\|_{L^{2}(S_{\ub', u'})} +|u'|^{-\frac{1}{4}}\| \Lb^2\varphi \|_{L^{2}(S_{\ub', u'})}.
\end{align*}
Thus, we have
\begin{align*}
 S_{41}  &\lesssim \delta^{\frac{1}{2}} \cdot M^3.
\end{align*}
The estimates for $S_{42}$ and $S_{43}$ are similar to $S_{41}$, eventually, we have
\begin{equation*}
  S_{4} \lesssim  \delta^{\frac{1}{2}} \cdot M^3.
\end{equation*}

For $S_{5}$, $S_{6}$ and $S_{7}$, the estimates are similar to those of $S_1$ or $S_2$, we only give the conclusions
\begin{align*}
 S_5+ S_{6} +S_{7} \lesssim \delta^{\frac{1}{2}} \cdot M^4.
\end{align*}

For $S_8$, $S_9$, $S_{10}$ and $S_{11}$, once again, as in last subsection, we simply bound each term in the quadratic expressions in $L^2$. This yields
\begin{align*}
 S_{8} +S_{9} +S_{10} +S_{11}&\lesssim \delta^{\frac{1}{2}}  \cdot M^2.
\end{align*}

Putting all estimates together, if $\delta$ is sufficiently small, we obtain
\begin{equation*}
\int_{\underline{C}_{\underline{u}}}|\Lb^2 \Omega\varphi|^{2} \lesssim \delta^{2} I^2  + \delta^{\frac{1}{2}} \cdot M^3.
\end{equation*}
Finally, we have
\begin{equation}\label{ES-Fb3-Fb4-b}
 \Fb_{3}(u, \ub) \lesssim \delta I +\delta^{\frac{1}{8}} \cdot M^{\frac{3}{2}}.
\end{equation}

\subsection{End of the Bootstrap Argument}\label{Section End of the Bootstrap Argument}
We collect \eqref{ES E-Eb up to three derivative}, \eqref{ES two derivative F2 Fb2}, \eqref{ES-ES-F3-F4-b} and \eqref{ES-Fb3-Fb4-b} as follows:
\begin{equation}\label{End of the Bootstrap Argument}
M \lesssim I + \delta^{\frac{1}{16}} \cdot M^2
\end{equation}
Choosing $\delta$ sufficiently small depending on the quantities $I$, we conclude that
\begin{equation*}
 M \lesssim I.
\end{equation*}
This completes the proof of \textbf{Main A priori Estimates} in this section.

\subsection{Higher Order Derivative Estimates}\label{Section Higher Order Derivative Estimates}

For higher order derivative estimates, the argument is completely analogous, even simper, because we have already closed the bootstrap argument and we can simply use an induction argument to derive estimates for each order. Therefore, we shall omit the detail and only sketch the proof. We introduce a family of energy norms similarly as before:
\begin{equation*}
\begin{split}
E_{k}(u,\ub) &= \|L \nablaslash^{k-1} \varphi\|_{L^2(C_u)} + \delta^{-\frac{1}{2}}  \|\nablaslash^{k} \varphi\|_{L^2(C_u)},\\
\Eb_{k}(u,\ub) &= \|\nablaslash^{k} \varphi\|_{L^2(\Cb_{\ub})} + \delta^{-\frac{1}{2}} \| \Lb \nablaslash^{k-1} \varphi\|_{L^2(\Cb_{\ub})}.
\end{split}
\end{equation*}
We also need another family of norms which involves at least two null derivatives. They are defined as follows,
\begin{equation*}
F_{k}(u,\ub) =\delta \|L^2 \nablaslash^{k-2}\varphi\|_{L^2(C_u)}, \quad \Fb_{k}(u,\ub) = \| \Lb^2 \nablaslash^{k-2}\varphi\|_{L^2(\Cb_{\ub})}.
\end{equation*}

We derive the estimates by induction on the number of derivatives $k$. For $k=1, 2, 3$,  they corresponding quantities are all bounded by a universal constant depending only on the initial data:
\begin{equation*}
\sum_{i=1}^3 [E_i(u,\ub) + \Eb_1(u,\ub)]+\sum_{j=2}^3 [F_j(u,\ub) + \Fb_j(u,\ub)] \leq C(I),
\end{equation*}

We want to show that for all initial data of \eqref{Main Equation} and all $I \in \mathbb{R}_{>0}$ which satisfy
\begin{equation*}
\sum_{i=1}^{n+2} E_i(u_{0},\delta)  + \sum_{j=2}^{n+2} F_j(u_{0},\delta)  \leq I,
\end{equation*}
there is a constant $C(I,n)$ depending only on $I$ and $n$, if $\delta$ is sufficiently small (depending on $n$), so that
\begin{equation*}
 [E_{n+2}(u,\ub) + \Eb_{n+2}(u,\ub)]+ [F_{n+2}(u,\ub) + \Fb_{n+2}(u,\ub)] \leq C(I,n),
\end{equation*}
for all $u \in [u_0, u^*]$ and $\ub \in [0, \ub^*]$.

At this stage, we can use the induction assumption that
\begin{equation*}
\sum_{i=1}^{n+1} [E_i(u,\ub) + \Eb_i(u,\ub)]+\sum_{j=2}^{n+1} [F_j(u,\ub) + \Fb_j(u,\ub)] \leq C(I,n).
\end{equation*}
and make the following bootstrap assumption:
\begin{equation}
[E_{n+2}(u,\ub) + \Eb_{n+2}(u,\ub)]+ [F_{n+2}(u,\ub) + \Fb_{n+2}(u,\ub)] \leq M,
\end{equation}
for all $u \in [u_0, u^*]$ and $\ub \in [0, \ub^*]$, where $M$ is a sufficiently large constant.

We then proceed exactly as before: we first derive  $L^\infty$ for derivatives of order less or equal to $n$ as well as $L^4$ estimates for derivatives of order less or equal to $n+1$. In fact, for $2\leq k\leq n$, we have
\begin{equation*}
 \delta^{\frac{1}{2}} \|L\nablaslash^{k-1}\varphi\|_{L^\infty} + \delta^{-\frac{1}{4}} \|\nablaslash^{k}\varphi\|_{L^\infty} +  \delta^{-\frac{1}{4}} \| \Lb\nablaslash^{k-1}\varphi\|_{L^\infty} \lesssim M,
\end{equation*}
and
\begin{equation*}
\delta^{\frac{1}{2}} \| L\nablaslash^{k} \varphi \|_{L^{4}(S_{\ub,u})} +\delta^{-\frac{1}{4}} \|\nablaslash^{k+1} \varphi\|_{L^{4}(S_{\ub,u})} + \delta^{-\frac{1}{4}} \| \Lb\nablaslash^{k}\varphi \|_{L^{4}(S_{\ub,u})} \lesssim M.
\end{equation*}
We then can derive energy estimates use a similar argument as before to derive
\begin{equation*}
\begin{split}
E_{n+2}(u,\ub) + \Eb_{n+2}(u,\ub) &\lesssim I  +  \delta^{\frac{1}{16}}  M^{2},\\
F_{n+2}(u,\ub) & \lesssim I  + \delta^{\frac{1}{16}}M^{\frac{3}{2}},\\
\Fb_{n+2}(u,\ub) &\lesssim  \delta \cdot I+\delta^{\frac{1}{8}} M^{\frac{3}{2}}.
\end{split}
\end{equation*}
Finally, we can take a sufficiently small $\delta$ to obtain the estimates.

\section{Existence of Solutions}
In this section, based on the a priori estimates in last section, we first show that \eqref{Main Equation} with data prescribed on $C_{u_0}$ where $u_0 \leq \ub \leq \delta$ in last sections can be solved all the way up the $t=-1$, i.e. the Region $2$. Recall that Region $2$ is in the future domain of dependence of $\Cb_0$ and $C_{u_0}$ (with $0 \leq \ub \leq \delta$) and the data on $\Cb_0$ is completely trivial.

To start, we use the local existence result \cite{R-90} of Rendall for semi-linear wave equations for characteristic data, we know that there exists a solution around $S_{0, u_0}$, say, defined in the region enclosed by $\Cb_{0}$, $C_{u_0}$ and $t = u_0 + \varepsilon$ with $\varepsilon << \delta$. Thanks to the a priori estimates, if at the beginning we assume the bound on data for at least $8$ derivatives, the $L^\infty$ norms of at least up to $6$ derivatives of the solution are bounded by the data on $t = u_0 + \varepsilon$. Therefore, we can solve a Cauchy problem with data prescribed on $t = u_0 + \varepsilon$ to construct a solution in the future domain dependence of $t = u_0 + \varepsilon$ whose boundary have two null hypersurface $C_{u_0 + \varepsilon}$ and $\Cb_{\varepsilon}$. Now we have two characteristic problem: for the first one, the data is prescribed on $\Cb_0$ and $C_{u_0 + \varepsilon}$; for the second one, the data is prescribed on $C_{u_0}$ and $\Cb_{\varepsilon}$. We can use Rendall's local existence result again to solve them around $S_{0, u_0+\varepsilon}$ and $S_{\varepsilon, u_0}$. In this way, we can actually push the solution to $t = u_0 + \varepsilon + \varepsilon'$ with another small $\varepsilon'$.

We then can repeat the above process in an obvious way to push the solution all the way to $t = u_0 + \delta$. Similarly, we can then push it from $t = u_0 + \delta$ to $t= -1$. Therefore, we have constructed a solution in the entire Region $2$. We remark that this process depends crucially on the a priori estimates since the $L^\infty$ norms of the derivatives of $\varphi$ is guaranteed to be bounded.

We now turn to the study of the solution $\varphi$ on $\Cb_{\delta}$ and it will be important for the construction of solution in Region $3$.
\begin{proposition}\label{data on Cb_delta} Assume we have bound on $E_i(u_0)$ with $i=1,2,\cdots, n+2$ and $F_j(u_0)$ with $j=2,3, \cdots, n+2$ for some fixed $n \geq 8$. Then, for $k=1,2, \cdots, n$, we have
\begin{equation*}
\begin{split}
\|\nablaslash^k \varphi\|_{L^\infty} + \|\Lb \nablaslash^{k-1} \varphi\|_{L^\infty}+\|\Lb^2 \nablaslash^{k} \varphi\|_{L^\infty} \lesssim \delta^{\frac{1}{4}},\\
\|L \nablaslash^{k-1} \varphi\|_{L^\infty}+\|L^2\nablaslash^{k} \varphi\|_{L^\infty} \lesssim \delta^{\frac{1}{4}}.
\end{split}
\end{equation*}
\end{proposition}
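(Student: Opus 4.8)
The plan is to read the bounds off the (now closed) a priori estimates of Section~\ref{Section A priori Estimates}, exploiting that $\Cb_\delta$ lies \emph{past} the short pulse. After the bootstrap of Section~\ref{Section A priori Estimates} is closed and the solution is constructed throughout Region~$2$, all the energy norms $E_k,\Eb_k,F_k,\Fb_k$ up to order $n+2$ are $\lesssim C(I)$ on the solved region, and the higher order analogue of Proposition~\ref{proposition L infinity and L4 estimates} gives, for $k\le n$, the pointwise bounds $\|\nablaslash^k\varphi\|_{L^\infty}+\|\Lb\nablaslash^{k-1}\varphi\|_{L^\infty}\lesssim\delta^{\frac14}$ and $\|L\nablaslash^{k-1}\varphi\|_{L^\infty}\lesssim\delta^{-\frac12}$ on that region; in particular the first two groups of terms in the Proposition are immediate. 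The entire content is therefore to upgrade the $L$-, $L^2$- and $\Lb^2$-derivatives to size $\delta^{\frac14}$ \emph{on $\Cb_\delta$}. Here the decisive geometric fact is that the past-most sphere $S_{\delta,u_0}$ of $\Cb_\delta$ sits at the right endpoint of the support of the short-pulse profile on $C_{u_0}$: since $\psi_0$ in \eqref{precise short pulse data on C_0} is supported in the \emph{open} interval $(0,1)$, the data $\varphi$ and all its $L$- and $\nablaslash$-derivatives vanish on $S_{\delta,u_0}$, while the $\Lb$-derivatives there are $O(\delta^{\frac12})$ by the transport arguments already used in Section~\ref{Section Initial Data in Short Pulse Regime}.

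For $\Lb^2\nablaslash^k\varphi$ I would commute \eqref{Main Equation} with $\Lb\nablaslash^k$, use \eqref{commute Lb Omega n with main equation}, and rewrite the resulting wave equation in the null frame as a transport equation for $\Lb^2\nablaslash^k\varphi$ in the $L$-direction --- i.e.\ along the generators of the outgoing cones $C_u$ --- and integrate it from $\Cb_0$, where $\varphi\equiv(0,0,1)$ to infinite order and hence the datum vanishes, up to $\Cb_\delta$. On the slab between $\Cb_0$ and $\Cb_\delta$ every term on the right is bounded in $L^\infty$ by $\lesssim\delta^{-\frac12}$: the worst contributions are the explicit curvature-type term $\tfrac1{r^2}L\nablaslash^k\varphi$ and the null-form products $L\nablaslash^p\varphi\cdot\Lb\nablaslash^q\varphi$, for which \eqref{null form bound} --- the absence of the $|L\cdot|^2$ component --- is exactly what keeps the size at $\delta^{-\frac12}$ rather than $\delta^{-1}$; terms proportional to $\Lb^2\nablaslash^k\varphi$ itself are absorbed into a coefficient whose integral over the $\ub$-interval is $O(\delta^{\frac12})$, and a short induction on the order $k$ disposes of the lower-order $\Lb^2$-terms that appear. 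Since the $\ub$-integration is over an interval of length $\delta$, this gives $\|\Lb^2\nablaslash^k\varphi\|_{L^\infty(\Cb_\delta)}\lesssim\delta\cdot\delta^{-\frac12}=\delta^{\frac12}\le\delta^{\frac14}$, uniformly in $u\in[u_0,u^*]$.

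For $L\nablaslash^{k-1}\varphi$, and then identically for $L^2\nablaslash^k\varphi$, I would argue by induction on $k$ using the \emph{opposite} transport direction, along the generators of $\Cb_\delta$. Writing $\Box\nablaslash^{k-1}\varphi$ from \eqref{commute n Omega with main equation} in the null frame and using $[L,\Lb]=0$ on scalars, one obtains an ODE of the shape $\Lb\big(L\nablaslash^{k-1}\varphi\big)=\big(\tfrac1{2r}+O(\delta^{\frac14})\big)L\nablaslash^{k-1}\varphi+(\text{forcing})$ along each generator of $\Cb_\delta$, with vanishing datum at $S_{\delta,u_0}$. The key point is that, \emph{restricted to $\Cb_\delta$}, the forcing is $\lesssim\delta^{\frac14}$: it is built from $\laplacianslash\nablaslash^{k-1}\varphi=\nablaslash^{k+1}\varphi$ and $\tfrac1{2r}\Lb\nablaslash^{k-1}\varphi$ (each $\lesssim\delta^{\frac14}$ by the first paragraph), and from products of two first-order derivatives --- and on $\Cb_\delta$ \emph{every} first-order derivative, including $L\varphi$ (the base case $k=1$, where the forcing $\laplacianslash\varphi-\tfrac1{2r}\Lb\varphi-\varphi|\nablaslash\varphi|^2$ is visibly $O(\delta^{\frac14})$) and, inductively, $L\nablaslash^j\varphi$ for $j<k-1$, is already $O(\delta^{\frac14})$; the products containing $L\nablaslash^{k-1}\varphi$ are the ones collected into the $O(\delta^{\frac14})$ coefficient. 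Gronwall over the \emph{bounded} $u$-interval $[u_0,u^*]$ then yields $\|L\nablaslash^{k-1}\varphi\|_{L^\infty(\Cb_\delta)}\lesssim\delta^{\frac14}$. The estimate for $L^2\nablaslash^k\varphi$ is the same argument applied to \eqref{commute L Omega n with main equation}, now with all of $L\nablaslash^j\varphi$ ($j\le k$) already known to be $O(\delta^{\frac14})$ on $\Cb_\delta$; one moreover reads $L\Lb\nablaslash^k\varphi=\nablaslash^{k+2}\varphi+(\text{products of small derivatives})$ off the commuted null-frame equation to control the remaining curvature terms.

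The crux --- and the only genuinely delicate point --- is this upgrade of the $L$- and $L^2$-derivatives from their generic sizes $\delta^{-\frac12}$, $\delta^{-\frac32}$ to $\delta^{\frac14}$ on $\Cb_\delta$. It succeeds only because (a) the transport is taken \emph{along} $\Cb_\delta$, so that it is the favourable $L\Lb$-structure of $\Box$ that drives the ODE; (b) these quantities vanish at the past sphere $S_{\delta,u_0}$, which is where the pulse profile $\psi_0$ is zero together with all its derivatives; and (c) the induction on $k$ lets one replace the dangerous $\delta^{-\frac12}$ factors inside the nonlinearity on $\Cb_\delta$ by the improved $\delta^{\frac14}$ ones as soon as the lower-order $L$-derivatives are known to be small there. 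The remaining points are routine: one loses a couple of derivatives passing from $L^2$ to $L^\infty$ via the Sobolev inequalities of Section~\ref{Soboleve and Gronwall} and in the transport estimates, which accounts for the gap between the $n+2$ derivatives assumed on the data and the range $k\le n$ in the conclusion; and one must only check that the slab between $\Cb_0$ and $\Cb_\delta$, and the generators of $\Cb_\delta$ as far as $C_{u^*}$, lie inside Region~$2$, where the solution has by then been constructed.
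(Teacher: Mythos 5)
Your proposal follows the same route as the paper: the $\nablaslash^k$ and $\Lb\nablaslash^{k-1}$ bounds come from the (higher order) a priori estimates, the $\Lb^2$ bounds from integrating along $L$ over the $\ub$-interval of length $\delta$ starting from the trivial data on $\Cb_0$, and the decisive $L$- and $L^2$-estimates from viewing the null-frame equation as a transport ODE along the generators of $\Cb_\delta$ with vanishing data at $S_{\delta,u_0}$ (where the pulse profile $\psi_0$, supported in $(0,1)$, vanishes to all orders) and forcing of size $\delta^{\frac14}$, closed by Gronwall and induction on $k$. This is exactly the paper's argument, which only works out the model case $\|L\varphi\|_{L^\infty}\lesssim\delta^{\frac14}$ explicitly; your write-up supplies the remaining details consistently.
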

\begin{proof}
The first inequality is standard. Its proof is basically integration along $L$ directions and using the higher order derivative estimates derived in last section.

The second one is a little bit surprising, since we expect $L$ derivative cause a loss of $\delta^{-\frac{1}{2}}$. The idea is that the loss in $\delta$ only should occur from initial data but not from the energy estimates. Recall that the data is given by
\begin{equation*}
 \varphi(\ub, u_0, \theta) = \frac{\delta^{\frac{1}{2}}\psi_0 (\frac{\ub}{\delta}, \theta) + (0,0,1)}{\sqrt{\delta|\psi_0 (\frac{\ub}{\delta}, \theta)|^2 +1}},
\end{equation*}
where the $\ub$-support of $\psi_0$ is inside $(0,1)$. Therefore, on $C_{u_0}$ near $S_{0,u_0}$, the data is completely trivial. In particular, $(L^i \nablaslash^j \varphi)(u_0, \delta, \theta) \equiv 0$. We then integrate \eqref{Main Equation} to get estimates on $\Cb_{\delta}$.

To illustrate the above idea, we now prove
\begin{equation*}
\|L \varphi\|_{L^\infty} \lesssim \delta^{\frac{1}{4}}.
\end{equation*}
The rest of the inequalities can be derived exactly in the same way. We will not give the proof.

We rewrite \eqref{Main Equation} as
\begin{equation*}
 \Lb L \varphi  - \frac{1}{2r}L\varphi-\varphi \Lb\varphi \cdot L\varphi =\laplacianslash \varphi-\frac{1}{2r}\Lb\varphi
-\varphi |\nablaslash \varphi|^2.
\end{equation*}
This can be view as an ODE for $L\varphi$ on $\Cb_\delta$ with trivial data on $S_{\delta,u_0}$. Since $\nablaslash^i \varphi$ and $\Lb \varphi$ are all of size $\delta^{\frac{1}{4}}$, we can integrate above equation and use Gronwall's inequality to derive
\begin{equation*}
\|L \varphi\|_{L^\infty} \lesssim \delta^{\frac{1}{4}}.
\end{equation*}
This completes the proof.
\end{proof}
\begin{remark}
According to the above proposition, the data on $\Cb_\delta$ induced from Region $2$ are small in energy norms. Note also that the above process to gain smallness actually loses one derivative through the integration.
\end{remark}

We now choose a Cauchy hypersurface $\Sigma = \{t = u_0 + \delta\}$. Let $\Sigma_1 = \Sigma \cap (\text{Region}\,\,1 \cup \text{Region}\,\,2)$ and $\Sigma_2 = \Sigma - \Sigma_1$. We can restrict the solution constructed in previous sections on $\Sigma_1$ to get
\begin{equation*}
(\varphi,\partial_t \varphi)|_{\Sigma_1} = (\varphi_1^{(0)}, \varphi_1^{(1)}).
\end{equation*}

According to the estimates derived in the above Propostion, we have the following properties for $(\varphi_1^{(0)}, \varphi_1^{(1)})$:
\begin{equation*}
\begin{split}
(\varphi_1^{(0)}, \varphi_1^{(1)})|_{\partial \Sigma_1} &= \text{constants};\\
\|(\partial^k \varphi_1^{(0)}, \partial^{k-1} \varphi_1^{(1)})\|_{L^\infty({\partial \Sigma_1})} &\lesssim \delta^{\frac{1}{4}}, \quad \text{for} \,\, k=1,2,\cdots,8.
\end{split}
\end{equation*}
We then can apply Whitney extension theorem (see Theorem $12$ in \cite{Luli} and references therein) to extend $(\varphi_1^{(0)}, \varphi_1^{(1)})$ to the entire $\Sigma$ to obtain Cauchy data $(\varphi^{(0)}, \varphi^{(1)})$ with the following properties:
\begin{equation*}
\begin{split}
(\varphi^{(0)}, \varphi^{(1)})|_{\Sigma_1} &=(\varphi_1^{(0)}, \varphi_1^{(1)});\\
(\varphi^{(0)}, \varphi^{(1)})|_{\{x \in \Sigma_2| \text{dist}(x,\Sigma_1)\geq 1\}} &=((0,0,1), (0,0,0));\\
\|(\partial^k \varphi^{(0)}, \partial^{k-1}\varphi^{(1)})\|_{L^\infty({\{x \in \Sigma_2| \text{dist}(x,\Sigma_1)\leq 1\}})} &\lesssim \delta^{\frac{1}{4}}, \quad \text{for} \,\, k=1,2,\cdots,7.
\end{split}
\end{equation*}
In particular, the (conformal) energies of $(\varphi^{(0)}, \varphi^{(1)})|_{\Sigma_2}$ up to seven derivatives are small (of order $\delta$). Therefore, according to the small data theory, we obtain a solution $\varphi$ in Region $4$. In particular, the energy flux on $C_{u_0}^+$ induced form the solution in Region $4$ are small.

We now have the data on $\Cb_\delta$ and $C_{u_0}^+$. They are past boundaries of Region $3$. We can then solve this small data problem on Region $3$. Together with the solutions constructed in other regions, this completes the proof of the main theorem.

\begin{remark}
In fact, to construct such initial data set, we do not have to take data in such a form as \eqref{precise short pulse data on C_0}. According to Section \ref{Section A priori Estimates}, if the data satisfy \eqref{L2 on initial surface 1} and \eqref{L2 on initial surface 2}, then the a priori estimates hold. Therefore, we can still construct solutions. In this way, we can construct an open set of data for which the main theorem of the paper is still valid.
\end{remark}

\end{document}